\renewcommand\labelenumi{(\roman{enumi})}
\renewcommand\theenumi\labelenumi
\numberwithin{equation}{section}
\theoremstyle{plain}
\newtheorem{theorem}{Theorem}[section]
\newtheorem{proposition}[theorem]{Proposition}
\newtheorem{lemma}[theorem]{Lemma}
\newtheorem{corollary}[theorem]{Corollary}
\theoremstyle{remark}
\newenvironment{remark}
{\pushQED{\qed}\remarkx}
 {\popQED\endremarkx}
\theoremstyle{definition}
\newtheorem{definition}[theorem]{Definition}
\newtheorem{question}[theorem]{Question}
\newenvironment{example}
{\pushQED{\qed}\examplex}
 {\popQED\endexamplex}
\newcommand\calS{\mathcal{S}}
\newcommand\calH{\mathcal{H}}
\newcommand\G{\mathcal{G}}
\newcommand\g{\mathfrak{g}}
\DeclareMathOperator{\Aut}{Aut}
\DeclareMathOperator{\supp}{supp}
\DeclareMathOperator{\Ad}{Ad}
\DeclareMathOperator{\SL}{SL}
\DeclareMathOperator{\SO}{SO}
\DeclareMathOperator{\hght}{ht}
\DeclareMathOperator{\dist}{d}
\newcommand\N{\mathbb{N}}
\newcommand\Q{\mathbb{Q}}
\newcommand\R{\mathbb{R}}
\newcommand\Z{\mathbb{Z}}
\newcommand{\dd}{\mathop{}\!\mathrm{d}}
\DeclarePairedDelimiter\br{(}{)}
\DeclarePairedDelimiter\abs{\lvert}{\rvert}
\DeclarePairedDelimiter\norm{\lVert}{\rVert}
\providecommand\for{}
\newcommand\SetSymbol[1][]{%
\nonscript\:#1\vert
\allowbreak
\nonscript\:
\mathopen{}}
\DeclarePairedDelimiterX\set[1]{\lbrace}{\rbrace}{%
\renewcommand\for{\SetSymbol[\delimsize]}
#1
}
\renewcommand{\epsilon}{\varepsilon}
\title[On convergence of random walks]{Aspects of convergence of random walks on finite volume homogeneous spaces}
\author{Roland Prohaska}
\address{Departement Mathematik, ETH Z\"{u}rich, R\"{a}mistrasse 101, 8092 Z\"{u}rich, Switzerland}
\email{roland.prohaska@math.ethz.ch}
\subjclass[2010]{Primary 60B15; Secondary 22F30, 60G50, 22E30}
\keywords{Random walk, homogeneous space, aperiodic, spectral gap, recurrence}
\date{\usdate\today}
\begin{document}

\begin{abstract}
We investigate three aspects of weak* convergence of the $n$-step distributions of random walks on finite volume homogeneous spaces $G/\Gamma$ of semisimple real Lie groups. 
First, we look into the obvious obstruction to the upgrade from Ces\`aro to non-averaged convergence: periodicity. 
We give  examples where it occurs and conditions under which it does not. 
In a second part, we prove convergence towards Haar measure with exponential speed from almost every starting point. 
Finally, we establish a strong uniformity property for the Ces\`aro convergence towards Haar measure for uniquely ergodic random walks. 
\end{abstract}
\maketitle
\section{Introduction}\label{sec:intro}
Let $G$ be a real Lie group and $\Gamma$ a lattice in $G$, that is, a discrete subgroup of $G$ such that the homogeneous space $X=G/\Gamma$ admits a $G$-invariant Borel probability measure $m_X$. 
This measure $m_X$ is unique and we refer to it as the (normalized) \emph{Haar measure} on $X$. 
A good example to have in mind is $G=\SL_d(\R)$ and $\Gamma=\SL_d(\Z)$. 

The objects of study in this paper are \emph{random walks} on $X$, given by probability measures $\mu$ on $G$: 
A step corresponds to randomly choosing a group element $g\in G$ according to $\mu$ and then moving from the current location $X\ni x$ to $gx$. 
Starting at $x_0\in X$, the distribution of the location after $n$ steps is given by the convolution 
\begin{align}\label{n-step-dist}
\mu^{*n}*\delta_{x_0},
\end{align}
which is the push-forward of the product measure $\mu^{\otimes n}\otimes\delta_{x_0}$ under the multiplication map $G^n\times X\ni (g_n,\dots,g_1,x)\mapsto g_n\dotsm g_1x\in X$. 

The broader context in which the study of these random walks originated is that of subgroup actions on homogeneous spaces. After Ratner's treatment of the rigidity and asymptotic properties of unipotent actions in her celebrated series of articles~\cite{rat2,rat1,rat3,rat4}, a new approach was needed to understand the dynamics of non-unipotent actions. Passing from a deterministic to a probabilistic point of view turned out to be a particularly fruitful angle. Still, understanding the long-term behavior of random walks on homogeneous spaces and the limiting behavior of the $n$-step distributions \eqref{n-step-dist} is a notoriously difficult problem. Major contributions to this line of study were made e.g.\ by Eskin--Margulis in their work on non-divergence~\cite{EM}, and by Benoist--Quint in their breakthrough series of articles~\cite{BQ1,BQ_rec,BQ2,BQ3}. 
We reproduce one of the main results of~\cite{BQ3} as motivating example. 
For the statement, recall that a probability measure $\nu$ on $X$ is called \emph{homogeneous} if there exists a closed subgroup $H$ of $G$ and a point $x\in X$ such that $\supp(\nu)=Hx$ is a closed orbit and $\nu$ is $H$-invariant. 
\begin{theorem}[{Benoist--Quint~\cite{BQ3}}]\label{thm:BQ}
Let $\mu$ be a compactly supported probability measure on $G$. 
Denote by $\calS$ and $\G$ the closed subsemigroup and subgroup of $G$ generated by $\supp(\mu)$, respectively, and suppose that the Zariski closure of $\Ad(\G)$ in $\Aut(\g)$ is Zariski connected, semisimple, and has no compact factors. 
Then for every $x_0\in X$ there is a homogeneous probability measure $\nu_{x_0}$ on $X$ with $\supp(\nu_{x_0})=\overline{\calS x_0}=\overline{\G x_0}$ and such that 
\begin{align}\label{cesaro}
\frac1n\sum_{k=0}^{n-1}\mu^{*k}*\delta_{x_0}\longrightarrow \nu_{x_0}
\end{align}
as $n\to\infty$ in the weak* topology. 
\end{theorem}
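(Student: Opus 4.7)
The plan is to follow the standard arc of the Benoist--Quint program: use non-divergence to extract weak* limit points of the Ces\`aro averages, invoke the classification of stationary measures to show each limit is homogeneous, and finally use recurrence plus the orbit structure to pin down a single limit.

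First, I would establish tightness of $\nu_n := \tfrac{1}{n}\sum_{k=0}^{n-1}\mu^{*k}*\delta_{x_0}$. Compact support of $\mu$ together with the assumption that the Zariski closure of $\Ad(\G)$ is semisimple with no compact factors puts us squarely in the range of the Eskin--Margulis non-divergence estimate, which produces compact $K\subset X$ with $\nu_n(K)\geq 1-\epsilon$ uniformly in $n$. Hence every subsequence of $(\nu_n)$ has a weak* convergent subsubsequence with limit a genuine probability measure. By a telescoping argument, $\mu*\nu_n-\nu_n=\tfrac{1}{n}(\mu^{*n}*\delta_{x_0}-\delta_{x_0})$ tends weakly to zero, so every such limit $\nu_\infty$ is $\mu$-stationary.

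Next comes the deep input: the Benoist--Quint measure classification theorem (the main output of \cite{BQ1,BQ2}) asserts that under the present Zariski-connected semisimple hypothesis on $\Ad(\G)$, every $\mu$-stationary Borel probability measure on $X$ is homogeneous, invariant under a closed subgroup $\calH\leq G$ containing $\G$, and supported on a single closed $\calH$-orbit. Applying this to any weak* limit $\nu_\infty$, I obtain a homogeneous $\G$-invariant probability measure. To identify the support, I would combine two observations: on the one hand $\nu_n$ is supported in $\calS x_0\cup\{x_0\}$, so $\supp(\nu_\infty)\subseteq\overline{\calS x_0}$; on the other hand, the recurrence theorem of \cite{BQ_rec} implies that the forward $\mu$-walk from $x_0$ spends a definite proportion of time in every neighborhood of $\supp(\nu_\infty)$, which together with $\G$-invariance forces $\supp(\nu_\infty)\supseteq\overline{\G x_0}$. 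The same recurrence, applied to show $\overline{\calS x_0}$ is $\G$-invariant, gives the equality $\overline{\calS x_0}=\overline{\G x_0}$.

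For the full (non-subsequential) convergence in \eqref{cesaro}, I would argue uniqueness of the weak* limit: any two limits $\nu_\infty,\nu_\infty'$ are homogeneous $\G$-invariant probability measures supported on the same closed $\G$-orbit $\overline{\G x_0}$, and such a measure is unique since the $\G$-invariant probability measure on a closed $\G$-orbit (equivalently, on $\calH/(\calH\cap\mathrm{Stab}(x))$ for a suitable $\calH\supseteq\G$) is uniquely determined. This uniqueness promotes subsequential convergence along every subsequence to convergence of $(\nu_n)$ itself.

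The main obstacle, and the reason the theorem is as hard as it is, is the measure classification step: recognising every $\mu$-stationary measure as homogeneous is the central technical achievement of the Benoist--Quint series and cannot be bypassed here. By comparison, tightness, $\mu$-stationarity of limit points, the support identification via recurrence, and the final uniqueness argument are relatively soft once the classification is in hand.
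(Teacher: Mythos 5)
The paper does not prove Theorem~\ref{thm:BQ} at all: it is quoted verbatim as \enquote{one of the main results of~\cite{BQ3}} and serves only as the motivating backdrop for the author's own contributions in \S\ref{sec:periodicity}--\S\ref{sec:uniform}. So there is no in-paper proof to compare against; the comparison has to be with the actual Benoist--Quint argument.

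Against that yardstick, your outline has the right overall architecture (tightness/non-divergence, stationarity of Ces\`aro limit points via telescoping, measure classification, then pinning down the limit), but the last two steps are not \enquote{relatively soft} in the way your closing paragraph suggests, and there are two concrete imprecisions worth flagging. First, the classification theorem of \cite{BQ1,BQ2} applies to $\mu$-\emph{ergodic} stationary measures: a general $\mu$-stationary measure is only an integral of homogeneous ergodic ones, so \enquote{apply the classification to $\nu_\infty$ to get a homogeneous measure} skips the ergodic decomposition and the issue of why that decomposition is trivial for the limit at hand. Second, knowing $\supp(\nu_\infty)=\overline{\G x_0}$ does \emph{not} by itself force $\nu_\infty$ to be the single homogeneous measure on $\overline{\G x_0}$: the orbit closure can contain a dense union of strictly smaller closed orbits, and a priori the weak* limit could put positive mass on a family of such sub-orbits while still having full support. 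Ruling this out --- i.e.\ showing there is no escape of mass towards lower-dimensional invariant submanifolds inside $\overline{\G x_0}$, and hence that the ergodic decomposition of every limit collapses to a point --- is precisely the additional content of~\cite{BQ3} beyond the classification in~\cite{BQ2}, and it is obtained by a nontrivial induction on the dimension of orbit closures together with quantitative recurrence estimates. Your sketch compresses this into a one-line uniqueness claim, which is the genuine gap.
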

Here the weak* convergence \eqref{cesaro} more explicitly means that for every compactly supported continuous function $f\in C_c(X)$ we have 
\begin{align*}
\frac1n\sum_{k=0}^{n-1}\int_X f\dd(\mu^{*k}*\delta_{x_0})=\frac1n\sum_{k=0}^{n-1}\int_{G^k} f(g_k\dotsm g_1x_0)\dd\mu^{\otimes k}(g_1,\dots,g_k)\longrightarrow \int_Xf\dd\nu_{x_0}
\end{align*}
as $n\to\infty$. 
Recently, it was shown by B\'enard--de Saxc\'e~\cite{benard-desaxce} that the compact support assumption on $\mu$ in Theorem~\ref{thm:BQ} can be relaxed to a finite first moment assumption; see  Remark~\ref{rmk:first_moment}. Another recent generalization of the theorem above in joint work of the author with C.\ Sert and R.\ Shi~\cite{expanding} replaces the algebraic assumption on the support of $\mu$ by a certain expansion condition, which allows for cases in which $\mu$ is e.g.\ supported on a parabolic subgroup of a semisimple group.

Some questions left open by Theorem~\ref{thm:BQ} are listed by Benoist--Quint at the end of their survey~\cite{BQ:intro}. 
A major one is the following. 
\begin{question}\label{q}
In the setting of Theorem~\ref{thm:BQ}, is it also true that 
\begin{align}\label{non-averaged}
\mu^{*n}*\delta_{x_0}\longrightarrow \nu_{x_0}
\end{align}
as $n\to\infty$? 
\end{question}
Answers are available only in special cases: 
Breuillard~\cite{Br} established \eqref{non-averaged} for certain measures $\mu$ supported on unipotent subgroups, Buenger~\cite{Bue} proved it for some sparse solvable measures, and in previous work the author dealt with the case of spread out measures~\cite{spread_out}. Very recently, B\'enard~\cite{benard.foguel} observed that \eqref{non-averaged} holds for aperiodic measures $\mu$ under the assumption that $\mu$ has two convolution powers which are not mutually singular.

The purpose of this article is to discuss three (largely independent) aspects of random walk convergence related to Theorem~\ref{thm:BQ} and Question~\ref{q}, mainly having in mind the case that $G$ is a semisimple real Lie group. 
We are going to use the following terminology. 
\begin{definition}
Let $\nu$ be a probability measure on $X$ and $x_0\in X$. 
We say that the random walk on $X$ given by $\mu$ \emph{converges to $\nu$ on average} (resp.\ \emph{converges to $\nu$}) from the starting point $x_0$ if $\frac1n\sum_{k=0}^{n-1}\mu^{*k}*\delta_{x_0}\rightarrow \nu$ (resp.\ $\mu^{*n}*\delta_{x_0}\rightarrow \nu$) as $n\to\infty$ in the weak* topology. 
\end{definition}
Convergence on average is also commonly referred to as \emph{Ces\`aro convergence}. 
We use the two terms interchangeably. 

The article is organized as follows. 

In~\S\ref{sec:periodicity}, we look into the obvious obstruction to the upgrade from Ces\`aro convergence to (non-averaged) convergence: periodicity. 
We show in Example~\ref{ex:example1} how \eqref{non-averaged} can fail when $x_0$ has finite orbit under $\calS$. 
Using a product construction, we can also produce a counterexample in which the orbit closure $\overline{\calS x_0}$ has positive dimension (Example~\ref{ex:example2}). 
In both cases, the periodic behavior occurs at the level of the connected components of the orbit closure. 
As it turns out, this is no coincidence: 
If, in the setting of Theorem~\ref{thm:BQ}, the orbit closure $\overline{\calS x_0}$ is connected, there can be no periodicity (Theorem~\ref{thm:aperiodicity}) and we can show that the Ces\`aro convergence \eqref{cesaro} also holds along arithmetic progressions (Corollary~\ref{cor:arithmetic_cesaro}). 

In~\S\ref{sec:spectral_gap}, we establish effective convergence of random walks to the normalized Haar measure $m_X$ for typical starting points $x_0$: 
When $\supp(\mu)$ generates a Zariski dense subgroup of a semisimple real Lie group $G$ without compact factors, for any fixed $L^2$-function $f$ on $X$ the convergence 
\begin{align*}
\int_X f\dd(\mu^{*n}*\delta_{x_0})\overset{n\to\infty}{\longrightarrow}\int_Xf\dd m_X
\end{align*}
not only holds but is in fact exponentially fast for $m_X$-almost every $x_0\in X$ (Theorem~\ref{thm:spectral_gap}, Proposition~\ref{prop:spectral_gap_example}). 
The proof relies on an $L^2$-spectral gap of the convolution operator 
\begin{align*}
\pi(\mu)\colon f\mapsto \Bigl(x\mapsto \int_Gf(gx)\dd\mu(g)\Bigr)
\end{align*}
acting on measurable functions on $X$. 
Taking into account regularity of the function $f$, the above can be further strengthened to the statement that almost every $x\in X$ is \emph{exponentially generic} (Definition~\ref{def:exp_generic}): 
Up to a constant factor depending on derivatives of $f$, the exponential speed of convergence holds uniformly over all compactly supported smooth functions (Theorem~\ref{thm:exp_generic}). 
Key to this upgrade are the definition of suitable Sobolev norms and a functional analytic argument involving relative traces, first exploited in a dynamical context by Einsiedler--Margulis--Venkatesh~\cite{EMV}. 

Finally, in~\S\ref{sec:uniform} we prove that convergence on average to $m_X$ happens locally uniformly in $x_0$ in a strong way when the random walk is uniquely ergodic and admits a Lyapunov function (Theorem~\ref{thm:lyapunov_uniform}). 
For example, this is the case when $G$ is a connected semisimple real algebraic group and $\supp(\mu)$ generates a non-discrete Zariski dense subgroup, and also in the setup of Simmons--Weiss~\cite{SW}, which has connections to Diophantine approximation problems on fractals. 
To this end, we introduce the new concept of \emph{$(K_n)_n$-uniform recurrence} (Definition~\ref{def:Kn_uniform}), which refines recurrence properties of random walks previously studied in~\cite{BQ_rec,EM}. 
\subsection*{Standing Assumptions \& Notation}
As many of our arguments work in greater generality, in the remainder of the article we will relax the assumptions stated at the beginning of this introduction. 
The following setup shall be in place whenever nothing else is specified: 
$G$ is a locally compact $\sigma$-compact metrizable group acting ergodically on a locally compact $\sigma$-compact metrizable space $X$ endowed with a $G$-invariant probability measure $m_X$; and $\mu$ is a Borel probability measure on $G$. 
\subsection*{Acknowledgments}
The author would like to express his gratitude to Andreas Wieser for valuable comments on preliminary versions of the article, and to Manfred Einsiedler for explaining how relative traces can be used to make separability effective. 
Thanks also go to HIM Bonn and the organizers of the trimester program \enquote{Dynamics: Topology and Numbers}, in the course of which parts of this manuscript were completed, for hospitality and providing an excellent working environment. Finally, the author is grateful to the anonymous referee for pointing out a simple way to establish a better speed of convergence in Theorems~\ref{thm:spectral_gap} and~\ref{thm:exp_generic}.
\section{Periodicity}\label{sec:periodicity}
In this section, we start with two simple counterexamples to \eqref{non-averaged}, which illustrate ways in which a random walk may exhibit periodic behavior (\S\ref{subsec:per_examples}). 
Analyzing these examples for their common feature, we are led to a simple condition ensuring aperiodicity, stated and proved in \S\ref{subsec:aperiodicity}. 

\subsection{Examples}\label{subsec:per_examples}
The first example with periodicity is on finite periodic orbits. 
In the following, for $d\ge 2$ we denote by $\mathbf{1}_d$ the $d\times d$-identity matrix. 
\begin{example}\label{ex:example1}
Consider the \emph{principal congruence lattice} 
\begin{align*}
\Gamma=\Gamma(2)=\set{g\in\SL_2(\Z)\for g\equiv\mathbf{1}_2\bmod 2}
\end{align*}
in $G=\SL_2(\R)$. 
Being the kernel of the reduction homomorphism from $\SL_2(\Z)$ to $\SL_2(\Z/2\Z)$, we recognize $\Gamma(2)$ as a finite-index normal subgroup of $\SL_2(\Z)$. 
In particular, $\Gamma(2)$ is a lattice in $G$. 
Let $\mu=\tfrac12(\delta_{h_1}+\delta_{h_2})$ with 
\begin{align*}
h_1=\begin{pmatrix}1&1\\0&1\end{pmatrix},\,h_2=\begin{pmatrix}1&0\\1&1\end{pmatrix}.
\end{align*}
Then the closed subgroup $\G$ generated by $\supp(\mu)=\set{h_1,h_2}$ is $\G=\SL_2(\Z)$, which is Zariski dense in $G$. 
The $\G$-orbit of $x_0=\mathbf{1}_2\Gamma\in G/\Gamma$ is 
\begin{align*}
\mathcal{O}=\Bigl\{&x_0,h_1x_0,h_2x_0,h_2h_1x_0=\bigl(\begin{smallmatrix}1&1\\1&2\end{smallmatrix}\bigr)x_0,h_1h_2x_0=\bigl(\begin{smallmatrix}2&1\\1&1\end{smallmatrix}\bigr)x_0,\\
&h_1h_2h_1x_0=h_2h_1h_2x_0=\bigl(\begin{smallmatrix}2&-1\\1&0\end{smallmatrix}\bigr)x_0\Bigr\},
\end{align*}
with transitions as shown in the following diagram: 
\begin{center}
\begin{tikzcd}
&h_2h_1x_0\arrow[rr,bend left,"h_2"]\arrow[ddl,bend left,"h_1"]&&h_1x_0\arrow[ddr,bend left,"h_1"]\arrow[ll,bend left, "h_2"]&\\
\\
h_1h_2h_1x_0\arrow[uur,bend left,"h_1"]\arrow[ddr,bend left,"h_2"]&&&&x_0\arrow[uul,bend left,"h_1"]\arrow[ddl,bend left,"h_2"]\\
\\
&h_1h_2x_0\arrow[rr,bend left,"h_1"]\arrow[uul,bend left,"h_2"]&&h_2x_0\arrow[uur,bend left,"h_2"]\arrow[ll,bend left, "h_1"]&
\end{tikzcd}
\end{center}
Consequently, we see that the random walk with starting point $x_0$ alternates between the two sets 
\begin{align*}
\mathcal{O}_1=\set{x_0,h_1h_2x_0,h_2h_1x_0}\text{ and }\mathcal{O}_2=\set{h_1x_0,h_2x_0,h_1h_2h_1x_0}.
\end{align*}
The $2$-step random walks on these sets constitute irreducible, aperiodic, finite state Markov chains, so that 
\begin{align*}
\mu^{*2n}*\delta_{x_0}&\longrightarrow\frac{1}{3}\sum_{p\in \mathcal{O}_1}\delta_p,\\
\mu^{*(2n+1)}*\delta_{x_0}&\longrightarrow\frac{1}{3}\sum_{p\in \mathcal{O}_2}\delta_p,
\end{align*}
as $n\to\infty$ in the weak* topology. 
%In fact, more is true: The above convergence holds in the total variation norm and happens exponentially fast. 
\end{example}
In the example above, the support of $\mu$ generates a Zariski dense subgroup of $G$ and the lattice $\Gamma$ in $G$ is irreducible. 
(Recall that, loosely speaking, \enquote{irreducibility} of $\Gamma$ means that it does not arise from a product construction, cf.\ \cite[Definition~5.20]{discrete_subgroups}.) 
By the work of Benoist--Quint (\cite[Corollary~1.8]{BQ3}), these properties force any orbit closure $\overline{\calS x_0}$ to be either finite or all of $X$. 
As soon as intermediate orbit closures are possible, however, one can also construct examples with periodic behavior on non-discrete orbit closures. 
\begin{example}\label{ex:example2}
Let $G$, $\Gamma$, $X=G/\Gamma$, $h_1,h_2$, $x_0$ and $\G$ be as in Example~\ref{ex:example1} and choose a diagonal matrix $a\in\SL_2(\R)$ such that the diagonal entries of $a^2$ are irrational. 
We are going to consider the random walk on the product space 
\begin{align*}
X\times X=(G\times G)/(\Gamma\times\Gamma)
\end{align*}
given by the probability measure $\mu=\tfrac14\sum_{i=1}^4\delta_{g_i}$ on $G\times G$ with 
\begin{align*}
g_1&=(h_1,ah_1a^{-1}),\,g_2=(h_1,\mathbf{1}_2),\\
g_3&=(h_2,ah_2a^{-1}),\,g_4=(h_2,\mathbf{1}_2).
\end{align*}
The (closed) subgroup generated by the support of this measure $\mu$ is given by $\G\times a\G a^{-1}=\SL_2(\Z)\times a\SL_2(\Z)a^{-1}$. 
Indeed, the correct entry in the second copy of $G$ can be arranged using a finite product of $g_1^{\pm 1},g_3^{\pm 1}$, and then the entry in the first copy can be corrected using $g_2^{\pm 1},g_4^{\pm 1}$. 
By Theorem~\ref{thm:BQ} we thus know that for the starting point $(x_0,x_0)\in X\times X$ we have the weak* convergence 
\begin{align*}
\frac1n\sum_{k=0}^{n-1}\mu^{*k}*\delta_{(x_0,x_0)}\longrightarrow \nu_{(x_0,x_0)}
\end{align*}
as $n\to\infty$, where $\nu_{(x_0,x_0)}$ is the homogeneous probability measure on the closure of the $\G\times a\G a^{-1}$-orbit of $(x_0,x_0)$. 
(Recall that it makes no difference for the closure whether one considers the orbit under the generated subgroup or subsemigroup.) 

Let us identify this orbit closure. 
In the first copy of $X$, we recognize the finite orbit $\mathcal{O}$ from Example~\ref{ex:example1}. 
In the second copy, we see the action of irrational conjugates of $h_1,h_2$. 
As the acting group has product structure, the orbit closure in question is the product of these two orbit closures in the components: 
\begin{align*}
\overline{(\G\times a\G a^{-1})(x_0,x_0)}=\mathcal{O}\times \overline{a\G a^{-1} x_0}.
\end{align*}
Since the orbit $a\G a^{-1} x_0$ is infinite by our choice of the matrix $a$, it follows from \cite[Corollary~1.8]{BQ3} that $ \overline{a\G a^{-1} x_0}=X$, so that 
\begin{align*}
\overline{(\G\times a\G a^{-1})(x_0,x_0)}=\mathcal{O}\times X\text{ and }\nu_{(x_0,x_0)}= m_{\mathcal{O}}\otimes m_X
\end{align*}
for the normalized counting measure $m_{\mathcal{O}}$ on $\mathcal{O}$ and the normalized Haar measure $m_X$ on $X$. 
However, in analogy to Example~\ref{ex:example1}, the random walk is found to alternate between the sets 
\begin{align*}
\mathcal{O}_1\times X\text{ and }\mathcal{O}_2\times X,
\end{align*}
in the sense that $\supp(\mu^{*2n}*\delta_{(x_0,x_0)})\subset \mathcal{O}_1\times X$ and $\supp(\mu^{*(2n+1)}*\delta_{(x_0,x_0)})\subset \mathcal{O}_2\times X$ for all $n\in\N$. 
Hence, we conclude that the random walk starting from $(x_0,x_0)$ does not converge to $\nu_{(x_0,x_0)}$. 
\end{example}
\begin{remark}
The same behavior as in the previous example can be arranged inside a homogeneous space $X'=G'/\Gamma'$ that is the quotient of a semisimple real Lie group $G'$ by an irreducible lattice $\Gamma'$. 
Indeed, this is only a matter of choosing suitable embeddings $G\times G\hookrightarrow G'$ and $X\times X\hookrightarrow X'$, where $G$ and $X$ are as in Example~\ref{ex:example2}. 
Concretely, one can e.g.\ consider the $4\times 4$-congruence lattice 
\begin{align*}
\Gamma'=\Gamma(2)=\set{g\in\SL_4(\Z)\for g\equiv\mathbf{1}_4\bmod 2} 
\end{align*}
in $G'=\SL_4(\R)$ and the diagonal embeddings 
\begin{align*}
\begin{aligned}
G\times G&\hookrightarrow G',\\
(g,h)&\mapsto\begin{pmatrix}g&\\&h\end{pmatrix},
\end{aligned}
\qquad
\begin{aligned}
X\times X&\hookrightarrow X',\\
(g\Gamma,h\Gamma)&\mapsto\begin{pmatrix}g&\\&h\end{pmatrix}\Gamma'.
\end{aligned}
\end{align*}
We therefore see that Example~\ref{ex:example2}, i.e.\ periodic behavior on a non-discrete orbit closure, can be realized inside $X'=G'/\Gamma'$. 
Of course, after applying this embedding, the subgroup generated by the support of $\mu$ will no longer be Zariski dense in $G'$. 
\end{remark}

\subsection{An Aperiodicity Criterion}\label{subsec:aperiodicity}
Inspecting the examples above, one may notice that their common salient feature is that the orbit closure $\overline{\calS x_0}$ is disconnected. 
This naturally raises the question whether periodic behavior can also occur when this orbit closure is connected. 
In what follows, we answer this question in the negative. 
We shall use the following formalization of periodicity. 
\begin{definition}\label{def:periodicity}
Assume that the random walk on $X$ given by $\mu$ converges on average to a probability measure $\nu$ on $X$ from the starting point $x_0\in X$. 
We say that this convergence is \emph{periodic} if there exists an integer $d\ge 2$ and pairwise disjoint measurable subsets $D_0,\dots,D_{d-1}\subset X$ with $\nu(\partial D_i)=0$ for $0\le i< d$ and such that $(\mu^{*n}*\delta_{x_0})(D_{n\bmod d})=1$ for every $n\in \N$. 
Otherwise, we call the convergence \emph{aperiodic}. 
\end{definition}
The requirement on the boundaries of the sets $D_i$ is needed to ensure that the cyclic behavior is witnessed by the limit measure $\nu$. 
Without a condition of this sort, one could try to artificially define $D_i$ as the set of all points in $X$ that can be reached from $x_0$ precisely in $n\equiv i\bmod d$ steps. 
Indeed, this construction is possible for example when $\mu$ is finitely supported with the property that its support freely generates a discrete subsemigroup $\calS$ of $G$ and the starting point $x_0\in X$ has a free $\calS$-orbit. 
The latter is the case e.g.\ for $X=\SL_2(\R)/\SL_2(\Z)$, $\mu=\tfrac12(\delta_{h_1}+\delta_{h_2})$ with $h_1=(\begin{smallmatrix}1&2\\0&1\end{smallmatrix})$ and $h_2=(\begin{smallmatrix}1&0\\2&1\end{smallmatrix})$, and $x_0=a\SL_2(\Z)$ for a diagonal matrix $a\in\SL_2(\R)$ such that the diagonal entries of $a^2$ are irrational. 

We are now ready to state the announced aperiodicity theorem. 
\begin{theorem}\label{thm:aperiodicity}
Retain the notation and assumptions from Theorem~\textup{\ref{thm:BQ}} and let $x_0\in X$ be such that the orbit closure $\overline{\calS x_0}$ is connected. 
Then the Ces\`aro convergence to $\nu_{x_0}$ of the random walk on $X$ given by $\mu$ starting from $x_0$ is aperiodic. 
\end{theorem}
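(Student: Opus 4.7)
The plan is to derive a contradiction from a would-be periodic decomposition by applying Theorem~\ref{thm:BQ} a second time, to $\mu^{*d}$, thereby producing a proper clopen subset of the connected orbit closure $\overline{\calS x_0}$.

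\emph{Step 1 (residue-class limits).} I would first suppose for contradiction that the convergence is periodic with pairwise disjoint $D_0,\dots,D_{d-1}$, $d\ge 2$, as in Definition~\ref{def:periodicity}. Testing the weak-$*$ Cesàro limit against continuous approximations of $\mathbf{1}_{D_i}$ (permissible since $\nu_{x_0}(\partial D_i)=0$) yields $\nu_{x_0}(D_i)=1/d$. Writing $E_i := D_i\cap\supp(\nu_{x_0})$ and $V_i := D_i^\circ\cap\supp(\nu_{x_0})$, I look at the residue-class sub-averages
\[
\rho_n^{(i)} := \frac{1}{N_i(n)}\sum_{\substack{k<n\\k\equiv i\bmod d}}\mu^{*k}*\delta_{x_0},\qquad N_i(n)\sim n/d.
\]
Any weak-$*$ subsequential limit $\nu_i$ is a probability measure on $\overline{D_i}$ (no mass escapes because $\tfrac{1}{d}\sum_i\rho_n^{(i)}\to\nu_{x_0}$), and combining $\nu_{x_0}=\tfrac{1}{d}\sum_i\nu_i$ with $\nu_i(D_j)=0$ for $j\neq i$ forces $\nu_i=d\cdot\nu_{x_0}|_{E_i}$; the subsequential limit is therefore unique, with $\supp\nu_i=\overline{V_i}$. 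In particular the Cesàro average of $(\mu^{*d})^{*k}*\delta_{x_0}$ converges to $\nu_0$.

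\emph{Step 2 (Benoist--Quint for $\mu^{*d}$).} Comparing $\mu*\rho_n^{(i)}$ with $\rho_n^{(i+1)}$ gives $\mu*\nu_i=\nu_{(i+1)\bmod d}$, and since $\supp(\mu)$ is compact this upgrades to $\supp(\mu)\cdot\overline{V_i}=\overline{V_{i+1}}$; in particular $\nu_0$ is $\mu^{*d}$-stationary. To apply Theorem~\ref{thm:BQ} to $\mu^{*d}$ I must verify that the closed subgroup $\G^{(d)} := \overline{\langle(\supp\mu)^d\rangle}$ has $\Ad(\G^{(d)})$ Zariski dense in $\mathbf{H} := \overline{\Ad(\G)}^{\mathrm{Zar}}$. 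Working in the free group $F$ over $\supp\mu$ with the signed-length-mod-$d$ homomorphism $\ell\colon F \to \Z/d\Z$, one checks that the canonical surjection $\pi\colon F \twoheadrightarrow \langle\supp\mu\rangle$ sends $\ker\ell$ onto $\langle(\supp\mu)^d\rangle$: the inclusion $\pi(\ker\ell)\subseteq\langle(\supp\mu)^d\rangle$ follows from rewrites such as $st^{-1}=(st^{d-1})\cdot t^{-d}\in\langle(\supp\mu)^d\rangle$ together with their Schreier-type analogues, while the reverse is obvious. Hence $\langle(\supp\mu)^d\rangle$ is normal of index at most $d$ in $\langle\supp\mu\rangle$, and after taking closures $[\G:\G^{(d)}]\le d$. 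Since $\mathbf{H}$ is connected, a Zariski-closed subgroup of finite index in it must coincide with it, so $\overline{\Ad(\G^{(d)})}^{\mathrm{Zar}}=\mathbf{H}$. Theorem~\ref{thm:BQ} applied to $\mu^{*d}$ at $x_0$ then identifies $\nu_0$ as a homogeneous probability measure on a closed orbit $H_0 x_0 \subseteq X$, so $\overline{V_0}=H_0 x_0$.

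\emph{Step 3 (dimension and clopen contradiction).} Writing $\nu_{x_0}$ as Haar measure on the closed orbit $Hx_0=\overline{\calS x_0}$ provided by Theorem~\ref{thm:BQ}, both $\nu_{x_0}$ and $\nu_0$ are smooth Haar measures on the submanifolds $Hx_0$ and $H_0 x_0$ of $X=G/\Gamma$, with $\nu_0\ll\nu_{x_0}$ of density $d\mathbf{1}_{E_0}$. Were $\dim H_0 x_0<\dim Hx_0$, then $\nu_0$ would live on a positive-codimension submanifold of $Hx_0$ and be singular to $\nu_{x_0}$, contradicting absolute continuity; so the dimensions agree, making $H_0 x_0$ an open submanifold of $Hx_0$. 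Being also closed in $X$, it is a clopen subset of $Hx_0$, and its $\nu_{x_0}$-mass of $1/d<1$ shows it is a non-empty proper clopen subset of $\overline{\calS x_0}$ — a contradiction with the connectedness assumption. I expect the principal obstacle to be Step~2, namely ensuring that the Zariski density hypothesis of Theorem~\ref{thm:BQ} survives the passage $\mu\rightsquigarrow\mu^{*d}$; this hinges on the algebraic identification of $\langle(\supp\mu)^d\rangle$ as an index-$\le d$ subgroup of $\langle\supp\mu\rangle$ combined with the connectedness of $\mathbf{H}$.
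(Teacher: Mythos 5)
Your proof is correct, and it shares the high-level strategy of the paper — apply Theorem~\ref{thm:BQ} to $\mu^{*d}$ and extract a contradiction from connectedness — but the two key intermediate steps are handled by genuinely different arguments. For the Zariski density hypothesis after passing to $\mu^{*d}$, the paper proves a short geometric lemma (Lemma~\ref{lem:power}) that uses the $d$-th power map $g\mapsto g^d$: since $\phi^{-1}(U)$ is a nonempty Zariski open set for any such $U$ and $\phi$ sends the semigroup generated by $\supp(\mu)$ into the semigroup generated by $\supp(\mu)^d$, density is preserved without any group-theoretic bookkeeping. Your free-group/Reidemeister--Schreier argument showing $[\langle\supp\mu\rangle:\langle(\supp\mu)^d\rangle]\le d$, followed by finite index in a Zariski connected group, is correct but considerably heavier; it also yields a slightly stronger structural fact (index $\le d$) that the paper does not need. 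For the contradiction itself, the paper proves $\overline{\calS x_0}=\overline{\calS_d x_0}$ directly by exhibiting $\overline{\calS x_0}$ as the finite union $\bigcup_{k<d}g^{-k}\overline{\calS_d x_0}$ for some fixed $g\in\supp(\mu)$ and observing that these equidimensional submanifolds must then each be open, hence clopen, hence equal to the whole orbit closure by connectedness; the two measures $\nu_{x_0}(D_0)=1/d$ and $\nu_{x_0}(D_0)=1$ then force $d=1$. You instead build residue-class Ces\`aro limits $\nu_i=d\,\nu_{x_0}|_{E_i}$ and deduce $\dim H_0x_0=\dim Hx_0$ from the absolute continuity $\nu_0\ll\nu_{x_0}$, reaching the same clopenness contradiction. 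Both dimension arguments are sound; the paper's cover-by-translates argument sidesteps any discussion of how the residue-class sub-averages converge and avoids having to establish $\nu_0\ll\nu_{x_0}$, so it is somewhat more economical, whereas your approach has the mild advantage of explicitly identifying the $d$ residue-class limits $\nu_i$, which is of independent interest.
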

For the proof we need the following simple lemma. 
\begin{lemma}\label{lem:power}
Let $H$ be a Zariski connected real algebraic group and $S$ a subset of $H$ generating a Zariski dense subsemigroup. 
Then for every $d\in\N$, also the $d$-fold product set $S^d=\set{g_d\dotsm g_1\for g_1,\dots,g_d\in S}$ generates a Zariski dense subsemigroup of $H$. 
In particular, if $\supp(\mu)$ generates a Zariski dense subsemigroup for some probability measure $\mu$ on $H$, the same is true for $\supp(\mu^{*d})$. 
\end{lemma}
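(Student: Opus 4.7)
The plan is to show that the algebraic subgroup $H_d := \overline{\calS_d}^Z$ equals $H$, where $\calS_d$ denotes the subsemigroup generated by $S^d$. Throughout I will use the classical fact that the Zariski closure of a sub-semi-group of an algebraic group is automatically a sub-group (a consequence of the descending-chain condition applied to the images of the injective self-map $L_t\colon x \mapsto tx$ of the Zariski closure, for some $t \in \calS_d$), so $H_d$ is a closed subgroup of $H$.

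The main task, and the step I expect to be the main obstacle, is to establish that $H_d$ is normal in $H$. From $S^d \subset \calS_d$ alone one only gets the relatively weak information that $s^d \in H_d$ for every $s \in S$; promoting this to two-sided conjugation requires a small trick. For any $s \in S$ and $t_1, \dots, t_d \in S$ I plan to exploit the identity
\[
s\,(t_1 \cdots t_d)\,s^{-1} = \bigl[s\, t_1 \cdots t_d\, s^{d-1}\bigr]\cdot s^{-d},
\]
whose bracketed factor is a $2d$-fold product of elements of $S$, hence in $S^{2d} \subset \calS_d \subset H_d$, while $s^{-d} = (s^d)^{-1}$ also lies in $H_d$. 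This shows $s\calS_d s^{-1} \subset H_d$ and, upon taking Zariski closures, $sH_d s^{-1} \subset H_d$. The symmetric identity $s^{-1}(t_1 \cdots t_d)s = s^{-d}\cdot\bigl[s^{d-1}\, t_1 \cdots t_d\, s\bigr]$ yields $s^{-1}H_d s \subset H_d$, so each $s \in S$ normalizes $H_d$. Since $N_H(H_d)$ is an algebraic subgroup of $H$ containing $S$, it contains $\overline{\calS}^Z = H$, and $H_d$ is normal.

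With normality at hand, I would form the algebraic quotient group $H/H_d$ with projection $\pi$ and observe that $\pi(s)^d = e$ for every $s \in S$, while the relation $s^{d-1}t \in H_d$ forces $\pi(t) = \pi(s)^{1-d} = \pi(s)$ (using $\pi(s)^d = e$). Hence $\pi(S)$ reduces to a single element $a$ of order dividing $d$, and $\pi(\calS) \subset \langle a\rangle$ is finite. On the other hand, $\calS$ being Zariski dense in $H$ and $\pi$ being a continuous surjection force $\pi(\calS)$ to be Zariski dense in $H/H_d$; a finite Zariski-dense subset of an algebraic variety forces the variety to be finite, whereupon the Zariski connectedness of $H$ forces $H_d = H$. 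The measure-theoretic addendum is then immediate from $\supp(\mu)^d \subset \supp(\mu^{*d})$, as the Zariski-dense subsemigroup generated by $\supp(\mu)^d$ is contained in the one generated by $\supp(\mu^{*d})$.
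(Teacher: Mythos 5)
Your proof is correct, and it takes a genuinely different route from the paper's. The paper argues directly: fix an arbitrary nonempty Zariski open $U\subset H$, pull it back under the $d$-th power map $\phi\colon g\mapsto g^d$, observe that $\phi^{-1}(U)$ is Zariski open and nonempty (since $\phi$ is a diffeomorphism near the identity and $U$ is dense in the Lie group topology), pick an element $g$ of the semigroup generated by $S$ inside $\phi^{-1}(U)$, and note that $g^d\in U$ is a product of a multiple of $d$ elements of $S$, hence lies in the semigroup generated by $S^d$. That is short and sidesteps any normality or quotient considerations. Your structural route---proving that $H_d=\overline{\calS_d}^Z$ is a normal closed subgroup of finite index and invoking Zariski connectedness---is also sound, and the telescoping identity $s(t_1\cdots t_d)s^{-1}=(st_1\cdots t_ds^{d-1})\cdot s^{-d}$ is the key novel step. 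One small caveat: passing to the algebraic quotient $H/H_d$ over $\R$ requires a little care, since the map on real points need not be surjective and one should justify that the pushforward of a Zariski dense set stays dense. This is easily avoided by staying inside $H$: from $s^{-1}t=(s^d)^{-1}(s^{d-1}t)\in H_d$ one gets $S\subset aH_d$ for any fixed $a\in S$, and normality then gives $\calS\subset\bigcup_{k=1}^{d}a^kH_d$, a finite union of Zariski closed sets; Zariski density of $\calS$ makes $H_d$ a finite-index closed subgroup of the connected group $H$, forcing $H_d=H$. In exchange for being longer, your argument yields the a priori stronger structural conclusion that $\overline{\calS_d}^Z$ is a normal subgroup of finite index, whereas the paper's power-map trick is more elementary and makes transparent use of the Lie group structure.
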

\begin{proof}
Let $U\subset H$ be a non-empty Zariski open subset and consider the map $\phi\colon H\to H,\,g\mapsto g^d$. 
Since $\phi$ is Zariski continuous, $\phi^{-1}(U)$ is Zariski open. 
Moreover, this preimage is non-empty because $U$ is dense in the Lie group topology and $\phi$ is a diffeomorphism near the identity. 
By the assumption that $S$ generates a Zariski dense subsemigroup, we can thus find an element $g\in\phi^{-1}(U)$ that is the product of finitely many elements of $S$. 
It follows that $\phi(g)=g^d$ lies in the intersection of $U$ with the subsemigroup generated by $S^d$. 

The second claim involving $\mu$ immediately follows from the above together with the inclusion $\supp(\mu^{*d})\supset \supp(\mu)^d$. 
\end{proof}
\begin{proof}[Proof of Theorem~\textup{\ref{thm:aperiodicity}}]
Suppose $d\in\N$ is an integer such that there are pairwise disjoint $D_0,\dots,D_{d-1}\subset X$ with $\nu_{x_0}(\partial D_i)=0$ for all $0\le i< d$ and such that $(\mu^{*n}*\delta_{x_0})(D_{n\bmod d})=1$ for all $n\in\N$ as in the definition of periodicity. 
We have to show that $d=1$. 

First note that from Theorem~\ref{thm:BQ} and the properties of the sets $D_i$ it follows that 
\begin{align}\label{first_value}
\nu_{x_0}(D_0)=\lim_{n\to\infty}\frac1n\sum_{k=0}^{n-1}(\mu^{*k}*\delta_{x_0})(D_0)=\frac1d,
\end{align}
where the application of weak* convergence to the set $D_0$ is justified since it has negligible boundary with respect to the limit measure $\nu_{x_0}$. 
In view of Lemma~\ref{lem:power}, Theorem~\ref{thm:BQ} also applies to the $d$-step random walk given by $\mu^{*d}$. 
Assuming for the moment that the limit measure for this $d$-step random walk starting from $x_0$ coincides with $\nu_{x_0}$, we deduce that 
\begin{align}\label{second_value}
\nu_{x_0}(D_0)=\lim_{n\to\infty}\frac1n\sum_{k=0}^{n-1}(\mu^{*dk}*\delta_{x_0})(D_0)=1.
\end{align}
Together, \eqref{first_value} and \eqref{second_value} imply $d=1$, the desired conclusion. 

It thus remains to show that the $d$-step random walk starting from $x_0$ does indeed have the same limit measure as the $1$-step random walk. 
Denoting by $\calS$ and $\calS_d$ the closed subsemigroups of $G$ generated by $\supp(\mu)$ and $\supp(\mu^{*d})$, respectively, this statement is equivalent to the equality $\overline{\calS x_0}=\overline{\calS_d x_0}$ of orbit closures. 
To prove this, let $g\in\supp(\mu)$ be arbitrary. 
We claim that 
\begin{align*}
\overline{\calS x_0}=\bigcup_{k=0}^{d-1}g^{-k}\overline{\calS_d x_0}.
\end{align*}
Indeed, since $\overline{\calS x_0}$ is homogeneous, it is invariant under the group generated by $\calS$. 
As $\overline{\calS x_0}$ clearly contains $\overline{\calS_d x_0}$, the inclusion \enquote{$\supset$} follows. 
For the reverse inclusion let $g_n,\dots,g_1\in\supp(\mu)$ for some $n\in \N$. 
Choose $0\le k<d$ such that $n+k\equiv 0\bmod d$. 
Then $g^kg_n\dotsm g_1x_0\in\overline{\calS_d x_0}$ and hence $g_n\dotsm g_1 x_0\in g^{-k}\overline{\calS_d x_0}$, giving the claim. 

We already noted that Theorem~\ref{thm:BQ} applies to $\mu^{*d}$. 
In particular, the orbit closure $\overline{\calS_d x_0}$ and its translates by $g^{-k}$, $0\le k<d$, are submanifolds of $\overline{\calS x_0}$. 
Necessarily, all these translates have the same dimension, and since together they make up $\overline{\calS x_0}$ by the claim above, their shared dimension coincides with that of $\overline{\calS x_0}$. 
This implies that $\overline{\calS_d x_0}$ is open in $\overline{\calS x_0}$. 
However, it is also closed, so that the assumed connectedness of $\overline{\calS x_0}$ forces $\overline{\calS x_0}=\overline{\calS_d x_0}$. 
This completes the proof. 
\end{proof}
\begin{remark}\label{rmk:first_moment}
It was recently shown by B\'enard--de Saxc\'e~\cite{benard-desaxce} that the compact support assumption on $\mu$ in Theorem~\ref{thm:BQ} can be relaxed. Indeed, their \cite[Theorem~C]{benard-desaxce} establishes the same conclusion under the substantially weaker assumption that $\mu$ has a \emph{finite first moment}, meaning that
\begin{align*}
\int_G\log\norm{\Ad(g)}\dd \mu(g)<\infty.
\end{align*}
Relying on this stronger result, also our Theorem~\ref{thm:aperiodicity} above and Corollary~\ref{cor:arithmetic_cesaro}  below are seen to hold under a finite first moment assumption on $\mu$, instead of requiring compact support as in Theorem~\ref{thm:BQ}.
\end{remark}
We end this section by recording a corollary of the proof above. 
\begin{corollary}\label{cor:arithmetic_cesaro}
Retain the notation and assumptions from Theorem~\textup{\ref{thm:BQ}} and suppose that $\overline{\calS x_0}$ is connected. 
Let $d\in\N$ and denote by $\calS_d$ the closed subsemigroup of $G$ generated by $\supp(\mu^{*d})$. 
Then $\overline{\calS x_0}=\overline{\calS_d x_0}$, and for the homogeneous probability measure $\nu_{x_0}$ on this orbit closure we have for arbitrary $r\in\N_0$ that 
\begin{align}\label{arithmetic}
\frac1n\sum_{k=0}^{n-1}\mu^{*(dk+r)}*\delta_{x_0}\longrightarrow \nu_{x_0}
\end{align}
as $n\to\infty$ in the weak* topology. 
\end{corollary}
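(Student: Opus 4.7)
The corollary is essentially a packaging of observations made during the proof of Theorem~\ref{thm:aperiodicity}, and the plan is to assemble them and then handle the offset $r$ by a convolution/continuity argument.

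First I would establish the equality $\overline{\calS x_0}=\overline{\calS_d x_0}$. This is already contained in the last paragraph of the proof of Theorem~\ref{thm:aperiodicity}: using Lemma~\ref{lem:power}, the measure $\mu^{*d}$ again satisfies the hypotheses of Theorem~\ref{thm:BQ}, and the connectedness of $\overline{\calS x_0}$ together with the covering $\overline{\calS x_0}=\bigcup_{k=0}^{d-1}g^{-k}\overline{\calS_d x_0}$ forces $\overline{\calS_d x_0}$ to be both open and closed, hence equal to $\overline{\calS x_0}$. I would simply cite this argument rather than repeat it.

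Next I would identify the limit of the $d$-step Ces\`aro averages. Applying Theorem~\ref{thm:BQ} to $\mu^{*d}$ with starting point $x_0$, we obtain a homogeneous probability measure $\tilde\nu$ with $\supp(\tilde\nu)=\overline{\calS_d x_0}=\overline{\calS x_0}$ such that $\frac1n\sum_{k=0}^{n-1}\mu^{*dk}*\delta_{x_0}\to\tilde\nu$. Since homogeneous probability measures are uniquely determined by their (closed) orbit support, $\tilde\nu=\nu_{x_0}$, exactly as used in the proof of Theorem~\ref{thm:aperiodicity}.

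To extend from $r=0$ to arbitrary $r\in\N_0$, I would write
\begin{align*}
\frac1n\sum_{k=0}^{n-1}\mu^{*(dk+r)}*\delta_{x_0}=\mu^{*r}*\Bigl(\frac1n\sum_{k=0}^{n-1}\mu^{*dk}*\delta_{x_0}\Bigr),
\end{align*}
and use that convolution with the compactly supported measure $\mu^{*r}$ is continuous on Borel probability measures on $X$ with respect to the weak* topology (for $f\in C_c(X)$, the function $x\mapsto\int_G f(gx)\dd\mu^{*r}(g)$ again lies in $C_c(X)$ thanks to the compact support of $\mu^{*r}$). This yields weak* convergence of the averages to $\mu^{*r}*\nu_{x_0}$. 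Finally, $\nu_{x_0}$ is $\mu$-stationary (as noted already: convolving $\frac1n\sum_{k=0}^{n-1}\mu^{*k}*\delta_{x_0}\to\nu_{x_0}$ by $\mu$ and using continuity gives $\mu*\nu_{x_0}=\nu_{x_0}$), hence $\mu^{*r}*\nu_{x_0}=\nu_{x_0}$ by induction on $r$, which produces \eqref{arithmetic}.

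The only mildly subtle step is the identification of the $d$-step limit with $\nu_{x_0}$, since Theorem~\ref{thm:BQ} only asserts existence of a homogeneous limit; however, uniqueness of the homogeneous probability measure on a given closed orbit is what was already (implicitly) exploited in the proof of Theorem~\ref{thm:aperiodicity}, so no new input is needed. The remaining manipulations are standard weak* continuity arguments.
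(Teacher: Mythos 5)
Your proof is correct and follows essentially the same route as the paper. The only cosmetic difference is in the handling of the offset $r$: the paper tests the $r=0$ convergence against the shifted function $f_r(x)=\int_G f(gx)\dd\mu^{*r}(g)\in C_c(X)$, whereas you push the $d$-step Ces\`aro averages forward by convolution with $\mu^{*r}$ and then invoke weak*-continuity and $\mu$-stationarity of $\nu_{x_0}$; by the duality $\int f\dd(\mu^{*r}*\nu)=\int f_r\dd\nu$ these are the same computation viewed from the measure side versus the function side. Your explicit flag that $\tilde\nu=\nu_{x_0}$ requires knowing homogeneous probability measures on a given closed orbit closure are unique is a fair observation; the paper leaves this implicit (it is already baked into the step of the aperiodicity proof where equality of orbit closures is declared equivalent to equality of limit measures), and invoking $\mu^{*d}$-stationarity of both measures together with the Benoist--Quint classification is a clean way to justify it.
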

\begin{proof}
The statement about orbit closures was established as part of the proof of Theorem~\ref{thm:aperiodicity}. 
From Theorem~\ref{thm:BQ} we thus get the weak* convergence 
\begin{align}\label{r_equal_0}
\frac1n\sum_{k=0}^{n-1}\mu^{*dk}*\delta_{x_0}\overset{n\to\infty}{\longrightarrow} \nu_{x_0},
\end{align}
which is \eqref{arithmetic} for $r=0$. 
Given $f\in C_c(X)$, the general case follows by applying \eqref{r_equal_0} to the compactly supported continuous function $f_r$ defined by 
\begin{align*}
f_r(x)\coloneqq\int_Gf(gx)\dd\mu^{*r}(g)=\int_{G^r}f(g_r\dotsm g_1x)\dd\mu^{\otimes r}(g_1,\dots,g_r)
\end{align*}
for $x\in X$. 
\end{proof}
This corollary sharpens the convergence statement in Theorem~\ref{thm:BQ} in the case of a connected orbit closure: 
The Ces\`aro convergence to $\nu_{x_0}$ holds along arbitrary arithmetic progressions. 
Although this does not provide an answer to Question~\ref{q}, it at least allows the following conclusion to be drawn: 
If $(n_i)_i$ is a sequence of indices such that $\mu^{*n_i}*\delta_{x_0}$ converges to a weak* limit different from $\nu_{x_0}$ as $i\to\infty$, then $(n_i)_i$ cannot contain a density $1$ subset of an infinite arithmetic progression. 
\section{Spectral Gap}\label{sec:spectral_gap}
In this section, we will explain how a spectral gap of the convolution operator $\pi(\mu)$ associated to a random walk entails the convergence of $\mu^{*n}*\delta_x$ towards $m_X$ for $m_X$-a.e.\ $x\in X$. 
In its simplest form, the involved argument works in great generality and also produces an exponential rate of convergence from almost every starting point when the test function $f$ is fixed. 
This is done in \S\ref{subsec:gen_pts}. 
The following subsections \S\ref{subsec:hght_fct}--\S\ref{subsec:exp_gen} are dedicated to a substantial refinement of this spectral gap argument for random walks on homogeneous spaces of real Lie groups, making the exponentially fast convergence uniform over smooth test functions. 

\subsection{Generic Points}\label{subsec:gen_pts}
Recall that $\pi(\mu)\colon L^\infty(X,m_X)\to L^\infty(X,m_X)$ is defined by 
\begin{align*}
\pi(\mu)f(x)\coloneqq\int_Xf\dd(\mu*\delta_x)=\int_Gf(gx)\dd\mu(g)
\end{align*}
for $f\in L^\infty(X,m_X)$ and $x\in X$, and that it extends to a continuous contraction on each $L^p$-space (see \cite[Corollary~2.2]{BQ_book}). 
We shall study its behavior on $L^2(X,m_X)$. 
By ergodicity, the $G$-fixed functions are the constant functions, so we restrict our attention to their orthogonal complement $L_0^2(X,m_X)$ of $L^2$-functions with mean $0$. 
\begin{definition}
We say that $\mu$ has a \emph{spectral gap} on $X$ if the associated convolution operator $\pi(\mu)$ restricted to $L_0^2(X,m_X)$ has spectral radius strictly less than $1$. 
\end{definition}
We are going to use the notation $\rho(T)$ to denote the spectral radius of an operator~$T$. Then by the spectral radius formula, $\mu$ having a spectral gap on $X$ can be reformulated as the requirement that 
\begin{align*}
\rho\bigl(\pi(\mu)|_{L_0^2}\bigr)=\lim_{n\to\infty}\sqrt[n]{\norm{\pi(\mu)|^n_{L_0^2}}_{\mathrm{op}}}<1.
\end{align*}

Given the existence of a spectral gap, we obtain an almost everywhere convergence result in a quite general setup. 
\begin{theorem}\label{thm:spectral_gap}
Suppose that $\mu$ has a spectral gap on $X$. 
Then $m_X$-a.e.\ $x\in X$ is \emph{generic} for the random walk on $X$ given by $\mu$, meaning that 
\begin{align*}
\mu^{*n}*\delta_x\longrightarrow m_X
\end{align*}
as $n\to\infty$ in the weak* topology. 
This convergence is exponentially fast in the sense that for every fixed $f\in L^2(X,m_X)$ we have 
\begin{align}\label{desired_statement}
\limsup_{n\to\infty}\abs*{\int_Xf\dd(\mu^{*n}*\delta_x)-\int f\dd m_X}^{1/n}\le \rho\bigl(\pi(\mu)|_{L_0^2}\bigr)
\end{align}
for $m_X$-a.e.\ $x\in X$. 
\end{theorem}
\begin{proof}
By separability of $C_c(X)$, for the statement about weak* convergence it suffices to prove $m_X$-a.s.\ convergence for one fixed function $f\in C_c(X)$. 
Consequently, it is enough to prove the second assertion of the theorem. 
To this end, fix a function $f\in L^2(X,m_X)$ and a rational number $\rho\bigl(\pi(\mu)|_{L_0^2}\bigr)<\alpha<1$, and consider the $L_0^2$-function $f_0=f-\int f\dd m_X$. 
Then in view of the spectral radius formula we have 
\begin{align*}
\norm[\Big]{\pi(\mu)^nf-\int f\dd m_X}_{L^2}=\norm{\pi(\mu)^nf_0}_{L^2}\le \norm{\pi(\mu)|^n_{L_0^2}}_{\mathrm{op}}\norm{f_0}_{L^2}\le \alpha^n\norm{f_0}_{L^2}
\end{align*}
for sufficiently large $n\in\N$. 

Fix in addition a rational number $\epsilon\in(0,1)$. By Chebyshev's inequality, the above implies that for large $n$ we have 
\begin{gather*}
m_X\br*{\set*{x\in X\for \abs[\Big]{\pi(\mu)^nf(x)-\int f\dd m_X}\ge \alpha^{n(1-\epsilon)}\norm{f_0}_{L^2}}}\\\le \frac{\norm*{\pi(\mu)^nf-\int f\dd m_X}_{L^2}^2}{\alpha^{2n(1-\epsilon)}\norm{f_0}_{L^2}^2}\le\alpha^{2\epsilon n}.
\end{gather*}
By Borel--Cantelli it follows that for all $x$ in a full measure set $A_{\alpha,\epsilon}$, the inequality 
\begin{align*}
\abs[\Big]{\pi(\mu)^nf(x)-\int f\dd m_X}\ge \alpha^{n(1-\epsilon)}\norm{f_0}_{L^2}
\end{align*}
holds only for finitely many $n\in\N$. 
Since $\pi(\mu)^nf(x)=\int f\dd(\mu^{*n}*\delta_x)$, we conclude that \eqref{desired_statement} holds for all $x$ in a countable intersection of the sets $A_{\alpha,\epsilon}$ over rational numbers $\alpha$ approaching $\rho\bigl(\pi(\mu)|_{L_0^2}\bigr)$ and $\epsilon$ approaching $0$ from above. 
\end{proof}
\begin{remark}\label{rmk:quantitative}
In the second conclusion of Theorem~\ref{thm:spectral_gap}, how long it takes for the exponential rate of convergence to kick in depends on the point $x$. 
However, the measure of sets on which one has to wait for a long time can be controlled as follows: 
Given $\rho\bigl(\pi(\mu)|_{L_0^2}\bigr)<\alpha<1$, choose $N\in\N$ such that $\norm{\pi(\mu)|^n_{L_0^2}}_{\mathrm{op}}\le \alpha^n$ for all $n\ge N$. 
Then if we additionally take $\epsilon\in(0,1)$ and denote 
\begin{align*}
B_{\alpha,\epsilon,n,f}=\set*{x\in X\for \abs[\Big]{\pi(\mu)^{n'}f(x)-\int f\dd m_X}\ge \alpha^{n'(1-\epsilon)}\norm{f_0}_{L^2}\text{ for some }n'\ge n},
\end{align*}
the proof above gives the bound 
\begin{align*}
m_X\br*{B_{\alpha,\epsilon,n,f}}\le \frac{\alpha^{2\epsilon n}}{1-\alpha^{2\epsilon}}
\end{align*}
for every $n\ge N$. 
In particular, the measure of the set on which the exponential convergence does not start during the first $n$ steps decays exponentially in $n$. 
\end{remark}
We now demonstrate that the previous result covers the case announced in~\S\ref{sec:intro}. 
\begin{proposition}\label{prop:spectral_gap_example}
Let $G$ be a connected semisimple real Lie group without compact factors and with finite center, $\Gamma\subset G$ a lattice, and $X$ the homogeneous space $G/\Gamma$ endowed with the Haar measure $m_X$. 
Suppose that the closed subsemigroup $\calS$ generated by $\supp(\mu)$ has the property that $\Ad(\calS)$ is Zariski dense in $\Ad(G)$. 
Then $\mu$ has a spectral gap on $X$. 
\end{proposition}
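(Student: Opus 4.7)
The plan is to obtain the spectral gap of $\pi(\mu)$ on $L_0^2(X,m_X)$ by combining two classical ingredients: (i) the existence of a spectral gap for the ambient $G$-representation $\pi$ on $L_0^2(X,m_X)$, and (ii) a transfer mechanism that converts (i) into a spectral radius bound for the convolution operator under the Zariski density hypothesis.

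For (i), I would invoke the classical fact that when $G$ is a connected semisimple real Lie group with finite center and no compact factors and $\Gamma\subset G$ is any lattice, the unitary representation of $G$ on $L_0^2(G/\Gamma)$ does not weakly contain the trivial representation, or equivalently is strongly $L^p$ for some finite $p$. In higher-rank simple factors this is Kazhdan's property (T); the general semisimple case with possibly non-uniform $\Gamma$ is assembled from property (T) in higher rank, Selberg/Jacquet--Gelbart style bounds on the exceptional automorphic spectrum in rank one, and the Cowling--Haagerup--Howe framework for $L^p$-representations. I would cite this rather than reprove it.

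For (ii), I would apply the Cowling--Haagerup--Howe tensor-power trick: for some integer $k$ depending on $p$, the tensor power $\pi^{\otimes 2k}$ is weakly contained in the left regular representation $\lambda_G$ of $G$ on $L^2(G)$. Combined with the standard passage from spectral radius to operator norm using the self-adjoint convolution $\mu\ast\check\mu$, this produces an inequality of the form
\[
\rho\bigl(\pi(\mu)|_{L_0^2}\bigr)^{2k} \;\le\; \bigl\|\lambda_G(\mu \ast \check\mu)\bigr\|_{\mathrm{op}},
\]
so that it suffices to show the right-hand side is strictly less than $1$. The hypothesis that $\Ad(\calS)$ is Zariski dense in $\Ad(G)$ forces the closed subgroup $\G$ generated by $\supp(\mu)$ to project to a Zariski dense subgroup of the non-amenable Lie group $\Ad(G)$; a standard ping-pong construction inside any Zariski dense subgroup of a connected semisimple Lie group produces non-abelian free subgroups, so $\G$ is non-amenable. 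The Derriennic--Guivarc'h generalization of Kesten's theorem to locally compact groups then yields $\|\lambda_G(\mu\ast\check\mu)\|_{\mathrm{op}}<1$.

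The main obstacle I expect is (i): the uniform spectral gap on $L_0^2(G/\Gamma)$ is considerably more delicate in the non-uniform rank-one case, where its validity rests on non-trivial Selberg-type bounds on the exceptional automorphic spectrum near the trivial representation. Once (i) is granted, the transfer in (ii) together with the non-amenability of $\G$ is essentially formal.
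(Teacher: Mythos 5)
Your proof takes the same two-step strategy as the paper: first obtain a spectral gap for the ambient $G$-representation on $L_0^2(G/\Gamma)$ (via Bekka's result that it does not weakly contain the trivial representation), and then transfer this to a spectral radius bound for $\pi(\mu)$ using a non-amenability condition on the group generated by $\supp(\mu)$. The difference is that the paper simply cites a ready-made transfer theorem (Theorem~C of the reference it labels \texttt{spectral\_gap}, which gives the spectral gap for $\pi(\mu)$ once the restriction to each simple factor does not weakly contain the trivial representation and $\mu$ does not project into a closed amenable subgroup of any simple factor), whereas you re-derive the transfer via the Cowling--Haagerup--Howe tensor-power trick and the Derriennic--Guivarc'h extension of Kesten's criterion. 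That re-derivation is morally correct, though your exponent bookkeeping in the displayed inequality is a little off and the step that actually passes from $\|\pi^{\otimes 2k}(\nu)\|\le\|\lambda_G(\nu)\|$ to a bound on $\|\pi(\nu)\|$ uses a Jensen/H\"older estimate on matrix coefficients that you should spell out — the tensor power of an operator-valued integral is not the integral of the tensor power, so the comparison goes through positive-definite functions, not through $\pi(\mu)^{\otimes 2k}$. Two smaller points: the paper closes the non-amenability check via Furstenberg's theorem that an amenable linear group has compact Zariski closure (so the Zariski closure $\Ad(G)$ would have a compact factor, a contradiction), rather than via ping-pong and the Tits alternative — both work, but the paper's version is more economical and also delivers exactly the per-simple-factor condition that its cited transfer theorem requires; and the equivalence you invoke in step~(i), that non-weak-containment of the trivial representation is the same as being strongly $L^p$ for some finite $p$, is itself the Cowling--Haagerup--Howe theory for semisimple groups (handling the rank-one factors and the product cross-terms), not something to pass over as a classical fact without attribution. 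None of this is a fatal gap, but your sketch effectively re-proves the black-box theorem the paper cites, and in doing so leaves the real analytic content — the tensor-power comparison and the $L^p$ integrability across factors — implicit.
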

\begin{proof}
Consider the regular representation of $G$ on $L_0^2(X,m_X)$. 
By \cite[Lemma~3]{bekka} it doesn't weakly contain the trivial representation. 
From this, in view of \cite[Theorem~C]{spectral_gap}, the result follows if we can argue that the projection of $\mu$ to any simple factor of $G$ is not supported on a closed amenable subgroup. 
However, since amenability passes to the Zariski closure (see e.g.\ \cite[Theorem~4.1.15]{Z}) the latter would imply that one of the simple factors of $\Ad(G)$ is amenable, hence compact by a classical result of Furstenberg (see e.g.\ \cite[Proposition~4.1.8]{Z}). 
\end{proof}
\subsection{Good Height Functions}\label{subsec:hght_fct}
Inspecting the proof of Theorem~\ref{thm:spectral_gap}, one observes that every step is effective, with explicit bounds and good control over the measure of exceptional sets, except for the very first one: separability of the space $C_c(X)$ of compactly supported continuous functions. 
In the remainder of this section, we aim to also make effective this step, the goal being exponentially fast convergence $\mu^{*n}*\delta_x\rightarrow m_X$ from almost every starting point, uniformly over functions $f$ on $X$. 
As merely continuous functions can behave arbitrarily badly (with respect to the convergence problem at hand), there is no hope of achieving this feat for all $f\in C_c(X)$. 
We shall therefore restrict our attention to smooth functions of compact support, and take into account their regularity by considering not just their $L^2$, but also certain Sobolev norms. 
Built into the definition of these norms will be what we call a \emph{good height function}, the concept of which is introduced in this subsection. 

Our setup is as follows: 
Let $G$ be a real Lie group with Lie algebra $\g$. 
We endow $\g$ with a scalar product, which we use to define a right-invariant metric $\dist^G$ on $G$. 
Given a lattice $\Gamma\subset G$, this metric descends to a metric $\dist^X$ on $X=G/\Gamma$ such that the projection $G\to X$ is locally an isometry. 
Moreover, we fix an orthonormal basis of $\g$, using which we will identify $\g$ with $\R^{\dim\g}$. 
Here is the crucial definition. 
\begin{definition}
We call a measurable function $\hght\colon X\to(0,\infty)$ a \emph{good height function} if there exists $0<R\le 1$ and a function $r\colon X\to(0,R]$ with the following properties: 
\begin{enumerate}
\item The restriction of the exponential map $\exp\colon (-R,R)^{\dim\g}\to G$ is a diffeomorphism onto its image and we have $\exp((-r/2,r/2)^{\dim\g})\subset B_r^G(e)$ for all $r\le R$, where $B_r^G(e)$ denotes the open ball of radius $r$ around the identity $e\in G$ with respect to the metric $\dist^G$ on $G$. 
\item For all $x\in X$, the projection $G\supset B_{r(x)}^G(e)\to X,\,g\mapsto gx$ is injective. 
\item There exist constants $c,\kappa>0$ such that $r(x)\ge c\hght(x)^{-\kappa}$ for all $x\in X$. 
\item There exists a constant $\sigma>1$ such that $\hght(x)\le \sigma\hght(gx)$ for all $x\in X$ and all $g\in B_{r(x)}^G(e)$. 
\end{enumerate}
\end{definition}

The definition suggests to think of a good height function as reciprocal of the injectivity radius. 
And indeed, this viewpoint allows their construction on any homogeneous space $X=G/\Gamma$. 
\begin{proposition}\label{prop:good_hght_existence}
Let $G$ be a real Lie group and $\Gamma$ a lattice in $G$. 
Then $X=G/\Gamma$ admits a good height function. 
\end{proposition}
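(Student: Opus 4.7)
The plan is to take $\hght$ to be essentially the reciprocal of the injectivity radius of the covering $G \to X$. For $x = g_0\Gamma \in X$, I first set
\[
\rho(x) := \inf_{\gamma \in \Gamma \setminus \{e\}} \dist^G(e, g_0 \gamma g_0^{-1}),
\]
which is independent of the lift $g_0$ (a different choice $g_0\gamma_1$ merely reindexes $\Gamma$) and strictly positive, since $g_0 \Gamma g_0^{-1}$ is a discrete subset of $G$. I would choose $R \in (0,1]$ small enough so that (i) holds — this is standard, using that $\exp$ is a local diffeomorphism at $0$ and that $\dist^G(e, \exp(v)) \le \|v\|$ in right-invariant metrics (after possibly rescaling the scalar product on $\g$ to absorb the $\sqrt{\dim\g}$ factor appearing when comparing cubes to balls). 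Then I would define
\[
r(x) := \min\bigl(R,\, \rho(x)/2\bigr), \qquad \hght(x) := 1/r(x).
\]
Condition (iii) holds trivially with $c = \kappa = 1$, and $\hght$ is measurable since $\rho$ is lower semi-continuous.

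Condition (ii) is immediate: if $h_1, h_2 \in B^G_{r(x)}(e)$ satisfy $h_1 x = h_2 x$, then $h_2^{-1}h_1 \in g_0 \Gamma g_0^{-1}$, while right-invariance of $\dist^G$ gives $\dist^G(e, h_2^{-1}h_1) = \dist^G(h_2, h_1) < 2r(x) \le \rho(x)$, forcing $h_2^{-1}h_1 = e$.

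The substantive step is (iv). If the cap is active at $x$, i.e.\ $r(x) = R$, then $r(hx) \le R = r(x)$ trivially. Otherwise $r(x) = \rho(x)/2$, and I would pick a near-minimizer $\gamma_0$ realizing $\rho(x)$ and set $u_0 := g_0\gamma_0 g_0^{-1}$, so that $\dist^G(e, u_0) \approx 2r(x)$. The element $h u_0 h^{-1}$ is a non-identity conjugate at $hx$, providing the upper bound $\rho(hx) \le \dist^G(e, h u_0 h^{-1})$. Since both $h$ and $u_0$ lie in the compact set $B^G_{2R}(e)$ (for $R$ small), the smoothness of conjugation together with a Baker–Campbell–Hausdorff expansion yields a constant $C = C(R) > 0$ with
\[
\dist^G(e, h u_0 h^{-1}) \le \dist^G(e, u_0)\bigl(1 + C\,\dist^G(e, h)\bigr).
\]
Combined with $\dist^G(e, h) \le r(x) \le R$, this gives $\rho(hx) \le 2r(x)(1 + CR)$ and hence $r(hx) \le \sigma\, r(x)$ with $\sigma := 1 + CR > 1$.

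The main obstacle is justifying this uniform BCH-type Lipschitz bound on conjugation. The plan there is routine but technical: writing $h = \exp(v)$ and $u_0 = \exp(w)$ with $v, w$ small, BCH gives $h u_0 h^{-1} = \exp\bigl(w + [v,w] + O(\|v\|\|w\|(\|v\| + \|w\|))\bigr)$, and estimating path lengths via $\dist^G(e, \exp(\cdot)) \le \|\cdot\|$ then delivers the claim after shrinking $R$ if necessary.
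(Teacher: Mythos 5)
Your choice of $\hght$ --- essentially the reciprocal of the injectivity radius, capped at $R$ --- and your verifications of (i)--(iii) follow the same line as the paper's proof. The genuine divergence is in (iv). You reduce to a single near-minimizing conjugate $u_0 = g_0\gamma_0 g_0^{-1}$ and then estimate how conjugation by $h$ distorts $\dist^G(e,\cdot)$, which forces you into the $\Ad$-Lipschitz / Baker--Campbell--Hausdorff estimate and produces $\sigma = 1 + CR$ with an implicit constant $C$; you rightly flag this as the substantive technical step. The paper avoids all of this by working with \emph{pairs} of points rather than a single short conjugate: if $r > r_{\mathrm{inj}}(x)$ and $g_1 \neq g_2 \in B_r^G(e)$ satisfy $g_1 x = g_2 x$, then $g_1 g^{-1} \neq g_2 g^{-1}$ satisfy $(g_1 g^{-1})(gx) = (g_2 g^{-1})(gx)$, and right-invariance gives $\dist^G(g_i g^{-1}, e) = \dist^G(g_i, g) \le \dist^G(g_i, e) + \dist^G(e, g) < r + r(x) < 2r$. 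Since right-multiplication is an isometry for $\dist^G$, no Lipschitz or BCH constant appears at all; the only cost is the additive term $\dist^G(e,g) < r(x) \le r_{\mathrm{inj}}(x)$. Letting $r \downarrow r_{\mathrm{inj}}(x)$ yields $r_{\mathrm{inj}}(gx) \le 2\, r_{\mathrm{inj}}(x)$ and hence (iv) with the explicit constant $\sigma = 2$, using nothing beyond right-invariance and the triangle inequality. Your version is correct in substance (the bound $\dist^G(e, h u h^{-1}) \le \norm{\Ad(h)}_{\mathrm{op}}\,\dist^G(e,u)$ does hold for a right-invariant Riemannian metric via path lengths), but it is heavier than needed, requires $R$ to be shrunk, and yields a less explicit $\sigma$. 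One small slip in your aside on (i): rescaling the scalar product on $\g$ does not absorb the $\sqrt{\dim\g}$ factor, since it rescales $\dist^G$ and the orthonormal coordinates proportionally, leaving the cube-versus-ball ratio unchanged; the paper simply takes $R$ with (i) as given.
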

\begin{proof}
Choose $R>0$ such that condition (i) of the definition is satisfied and set $r(x)=\min\set{R,r_{\mathrm{inj}}(x)}$, where $r_{\mathrm{inj}}(x)$ is the injectivity radius at $x\in X$, i.e.\ the maximal radius such that (ii) holds at $x$. 
Define 
\begin{align*}
\hght(x)=r(x)^{-1}.
\end{align*}
Then the only thing that needs to be verified is the validity of (iv). 
We claim that it holds with $\sigma=2$. 
This will follow if we can show that 
\begin{align}\label{radius_comparison}
r_{\mathrm{inj}}(gx)\le 2r_{\mathrm{inj}}(x)
\end{align}
whenever $g\in B_{r(x)}^G(e)$. 
To this end, let $r>r_{\mathrm{inj}}(x)$. 
Then by definition, there are distinct $g_1,g_2\in B_r^G(e)$ such that $g_1x=g_2x$. 
As $g\in B_{r(x)}^G(e)$, right-invariance of the metric implies 
\begin{align*}
\dist^G(g_ig^{-1},e)=\dist^G(g_i,g)\le \dist^G(g_i,e)+\dist^G(g,e)<r+r(x)<2r
\end{align*}
for $i=1,2$, and we also have $(g_1g^{-1})gx=(g_2g^{-1})gx$. 
This shows that $r_{\mathrm{inj}}(gx)\le 2r$, and as $r>r_{\mathrm{inj}}(x)$ was arbitrary, we see that \eqref{radius_comparison} holds. 
\end{proof}
Often, however, one might want to work with different, naturally occurring height functions. 
The flexibility in our definition of a good height function accommodates this possibility. 

In the examples below, we denote by $\lambda_1(\Lambda)$ the length of a shortest non-zero vector in a lattice $\Lambda\subset\R^d$. 
\begin{example}\label{ex:lattice_height_fct}
Let $G=\SL_d(\R)$ and $\Gamma=\SL_d(\Z)$. 
Then $X=G/\Gamma$ can be identified with the space of lattices in $\R^d$ with covolume 1 via 
\begin{align*}
X\ni g\SL_d(\Z)\longleftrightarrow g\Z^d\subset\R^d.
\end{align*}
Then the function $\hght=\lambda_1^{-1}$, defined on $X$ via the above identification, is a good height function. 
Indeed, one can first choose $R>0$ such that (i) is satisfied, and then set $r(x)=\min\set{R,r_{\mathrm{inj}}(x)}$ as in the proof of Proposition~\ref{prop:good_hght_existence}. 
Then (ii) is automatically satisfied, and (iv) is valid for a suitable choice of $\sigma$ due to the inequality $\lambda_1(gx)\le \norm{g}\lambda_1(x)$ for $g\in G$ and $x\in X$, where $\norm{\cdot}$ denotes any matrix norm. 
To see that also (iii) holds, let $x=g\Gamma$ and suppose that $hx=x$ for some $h\in G$ with $h\neq e$. 
Then for all $\gamma\in\SL_d(\Z)$, the matrix $(g\gamma)^{-1}h(g\gamma)$ fixes the lattice $\Z^d$ but is not the identity, so that 
\begin{align*}
\norm{g\gamma}^{\kappa_1}\norm{h-e}\ge\norm{(g\gamma)^{-1}(h-e)(g\gamma)}=\norm{(g\gamma)^{-1}h(g\gamma)-e} \ge c_1
\end{align*}
for some constants $c_1,\kappa_1>0$. 
For a basis change $\gamma\in\SL_d(\Z)$ such that $g\gamma$ consists of a reduced basis of the lattice $x$ we have $\norm{g\gamma}\le c_2\lambda_1(x)^{-\kappa_2}$ for some $c_2,\kappa_2>0$ (cf.\ e.g.\ \cite[Chapter~III]{geometry}). 
With this choice, the above inequality implies 
\begin{align*}
\norm{h-e}\ge c\lambda_1(x)^\kappa
\end{align*}
for $c=c_1/c_2$ and $\kappa=\kappa_1\kappa_2$. 
Since near the identity, the metric $\dist^G$ on $G$ is Lipschitz-equivalent to the distance induced by $\norm{\cdot}$, this establishes (iii). 
\end{example}
A similar construction is possible in a more general context. 
\begin{example}[\cite{EMV}]\label{ex:EMV}
Let $G=\mathbb{G}(\R)$ be the group of real points of a semisimple $\Q$-group $\mathbb{G}$ and $\Gamma$ an arithmetic lattice in $G$. 
Choose a rational $\Ad(\Gamma)$-stable lattice $\g_\Z\subset\g$. 
Then, using similar reasoning as in the previous example, the function $\hght$ on $X=G/\Gamma$ defined by 
\begin{align*}
\hght(x)=\lambda_1(\Ad(g)\g_\Z)^{-1}
\end{align*}
for $x=g\Gamma\in X$ is seen to be a good height function (cf.\ \cite[\S3.6]{EMV}). 
% 
%Indeed, all properties are verified in an analogous way. 
%We only point out that in order to establish (iii), one has to observe that if $hx=x$ with $x=g\Gamma$ and $h\neq e$, then $(\Ad(g)\gamma)^{-1}\Ad(h)(\Ad(g)\gamma)$ fixes the lattice $\g_\Z$ for every basis change $\gamma$ of the lattice $\g$, which, on choosing $\gamma$ such that $\Ad(g)\gamma$ corresponds to a reduced basis of $\Ad(g)\g_\Z$, gives a lower bound of the correct form for $\norm{\Ad(h)-e}$. 
\end{example}
\subsection{Sobolev Norms}\label{subsec:sobolev}
Given a good height function $\hght$ on $X$, the associated \emph{Sobolev norm} of degree $\ell\ge 0$ of a compactly supported smooth function $f\in C_c^\infty(X)$ is defined by 
\begin{align*}
\calS_\ell(f)^2=\sum_{\deg\mathcal{D}\le \ell}\norm{\hght(\cdot)^\ell\mathcal{D}f}^2_{L^2},
\end{align*}
where the sum runs over differential operators $\mathcal{D}$ given by monomials of degree at most $\ell$ in elements of the fixed orthonormal basis of $\g$ in the universal enveloping algebra. 

In other words, the differential operators $\mathcal{D}$ appearing above are $\partial_{v_1}\dotsm\partial_{v_k}$ for any $k$-tuple $(v_1,\dots,v_k)$ of elements of the fixed basis of $\g$, $0\le k\le \ell$, where $\partial_v$ for $v\in\g$ is defined by 
\begin{align*}
\partial_vf(x)=\lim_{t\to 0}\frac{f(\exp(tv)x)-f(x)}{t}
\end{align*}
for $f\in C_c^\infty(X)$ and $x\in X$. 

Here are two immediate observations. 
\begin{lemma}\label{lem:sobolev_increasing}
Let $\hght$ be a good height function on $X$ and $\calS_\ell$ the associated Sobolev norms. 
\begin{enumerate}
\item The norms $\calS_\ell$ are induced by inner products $\langle\cdot,\cdot\rangle_\ell$ on $C_c^\infty(X)$. 
\item Given $0\le\ell_0\le\ell_1$, there exists a constant $\tilde{c}>0$ such that $\calS_{\ell_0}\le\tilde{c}\calS_{\ell_1}$. 
\end{enumerate}
\end{lemma}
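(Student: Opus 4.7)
My plan for part (i) is to write down the obvious candidate inner product
\[
\langle f_1,f_2\rangle_\ell \coloneqq \sum_{\deg\mathcal{D}\le\ell}\int_X\hght(x)^{2\ell}\,\mathcal{D}f_1(x)\,\overline{\mathcal{D}f_2(x)}\dd m_X(x)
\]
and verify the axioms. Sesquilinearity and conjugate symmetry are immediate since each summand is a weighted $L^2$-pairing of $\mathcal{D}f_1$ against $\mathcal{D}f_2$. For positive definiteness, I would single out the degree-$0$ summand $\mathcal{D}=\mathrm{Id}$: vanishing of $\langle f,f\rangle_\ell$ already forces $\int_X\hght^{2\ell}\abs{f}^2\dd m_X=0$, and $\hght>0$ together with continuity of $f$ then gives $f\equiv 0$. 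By construction, $\langle f,f\rangle_\ell=\calS_\ell(f)^2$, so this inner product induces $\calS_\ell$.

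For part (ii), the heart of the matter is extracting a positive lower bound on the height function. The requirement in the definition that $r$ take values in $(0,R]$ together with the polynomial comparison $r(x)\ge c\,\hght(x)^{-\kappa}$ of condition (iii) forces $\hght(x)\ge (c/R)^{1/\kappa}=:C_0>0$ uniformly in $x$. Because $\ell_0-\ell_1\le 0$ and $t\mapsto t^{\ell_0-\ell_1}$ is decreasing on $(0,\infty)$, the bound $\hght\ge C_0$ yields the pointwise estimate $\hght^{\ell_0}\le C_0^{\ell_0-\ell_1}\hght^{\ell_1}$.

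Once this pointwise inequality is in hand, I would compare term by term: squaring, multiplying by $\abs{\mathcal{D}f}^2$ and integrating gives $\norm{\hght^{\ell_0}\mathcal{D}f}_{L^2}\le C_0^{\ell_0-\ell_1}\norm{\hght^{\ell_1}\mathcal{D}f}_{L^2}$ for every monomial differential operator $\mathcal{D}$ of degree at most $\ell_0$. Summing the squares over $\deg\mathcal{D}\le\ell_0$ and then enlarging the index set to $\deg\mathcal{D}\le\ell_1$ (which only adds non-negative terms) gives $\calS_{\ell_0}(f)^2\le C_0^{2(\ell_0-\ell_1)}\calS_{\ell_1}(f)^2$, so $\tilde c=C_0^{\ell_0-\ell_1}$ works. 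Neither assertion presents any real difficulty; the one step worth flagging is reading off the positive lower bound for $\hght$ from the axioms, after which everything is routine bookkeeping.
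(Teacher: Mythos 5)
Your proof is correct and takes the same approach as the paper's, which is terse on both parts: it declares (i) clear and for (ii) notes that boundedness away from zero of $\hght$ follows from property (iii) combined with boundedness of $r$. You simply supply the bookkeeping — the explicit inner product, the derivation $\hght(x)\ge(c/R)^{1/\kappa}$ from $c\,\hght(x)^{-\kappa}\le r(x)\le R$, and the term-by-term comparison with index-set enlargement — that the paper leaves to the reader.
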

\begin{proof}
Part (i) is clear. 
Part (ii) is also immediate from the definition of the Sobolev norms, once we know that a good height function must be bounded away from $0$. 
The latter, however, follows directly from property (iii) in the definition of a good height function, as the function $r$ appearing there is assumed to be bounded. 
\end{proof}

The proof of our convergence result in \S\ref{subsec:exp_gen} will depend on the following proposition. 
\begin{proposition}[\cite{EMV}]\label{prop:sobolev_facts}
For the Sobolev norms associated to a good height function on $X$, there exists a non-negative integer $\ell_0\ge 0$ and a constant $C>0$ with the following properties: 
\begin{enumerate}
\item \textup{(}Sobolev embedding estimate, \cite[(3.9)]{EMV}\textup{)} For every $f\in C_c^\infty(X)$ it holds that $\norm{f}_\infty\le C\calS_{\ell_0}(f)$. 
\item \textup{(}Finite relative traces, \cite[(3.10)]{EMV}\textup{)} For all integers $\ell\ge 0$ the relative trace $\operatorname{Tr}(\calS_\ell^2|\calS_{\ell+\ell_0}^2)$ is finite, meaning that for any orthogonal basis $(e^{(k)})_k$ in the completion of $C_c^\infty(X)$ with respect to $\calS_{\ell+\ell_0}$ 
\begin{align*}
\operatorname{Tr}(\calS_\ell^2|\calS_{\ell+\ell_0}^2)\coloneqq\sum_k\frac{\calS_\ell(e^{(k)})^2}{\calS_{\ell+\ell_0}(e^{(k)})^2}<\infty.
\end{align*}
\end{enumerate}
\end{proposition}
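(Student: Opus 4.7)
The plan is to prove both assertions by reducing to the standard Euclidean Sobolev theory on charts obtained from the exponential map, with the good height function encoding precisely how fast the radius of these charts must shrink near the cusps and with the weights $\hght(\cdot)^\ell$ in $\calS_\ell$ calibrated to compensate for this shrinkage. Two Euclidean inputs will dictate the choice of $\ell_0$: the embedding $H^{\ell_0} \hookrightarrow L^\infty$ and the Hilbert--Schmidt inclusion $H^{\ell+\ell_0} \hookrightarrow H^\ell$ (on bounded Euclidean domains) both hold once $\ell_0$ exceeds $\dim\g/2$, and matching the resulting powers of the ball radius against $r(x) \ge c\hght(x)^{-\kappa}$ from condition (iii) will force $\ell_0$ to be comparable to $\kappa\dim\g/2$ or larger.

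For part (i), I would fix $x\in X$, pull back $f$ via $v\mapsto\exp(v)x$ to a smooth function $\tilde f$ on $B_{r(x)}(0)\subset\R^{\dim\g}$---a well-defined injective chart by property (ii)---and apply the scaled Euclidean Sobolev embedding, yielding a pointwise bound of the form
\[
|f(x)| = |\tilde f(0)| \le C\sum_{j=0}^{\ell_0} r(x)^{j-\dim\g/2}\|\partial^j\tilde f\|_{L^2(B_{r(x)})}.
\]
Property (iv) guarantees that $\hght$ varies by at most the factor $\sigma$ across the chart, so $\hght(x)^{\ell_0}$ can be inserted inside the $L^2$-norm up to a bounded multiplicative constant, producing $\|\partial^j\tilde f\|_{L^2(B_{r(x)})} \le C\hght(x)^{-\ell_0}\calS_{\ell_0}(f)$. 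Combined with $r(x)^{j-\dim\g/2}\le C\hght(x)^{\kappa(\dim\g/2-j)^+}$ from (iii), the total exponent of $\hght(x)$ is non-positive once $\ell_0\ge\kappa\dim\g/2$, and since $\hght\ge 1/R\ge 1$ the uniform estimate $\|f\|_\infty\le C\calS_{\ell_0}(f)$ follows.

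For part (ii), the relative trace equals the squared Hilbert--Schmidt norm of the continuous inclusion between the Hilbert-space completions of $C_c^\infty(X)$ with respect to $\calS_{\ell+\ell_0}$ and $\calS_\ell$ provided by Lemma~\ref{lem:sobolev_increasing}(ii). I would bound it by a Vitali-type argument: produce a countable set $\{x_i\}$ such that the translates $B_{r(x_i)/2}^G(e)\,x_i$ cover $X$ with uniformly bounded multiplicity, and work with a subordinate smooth partition of unity. Decomposing functions accordingly reduces the Hilbert--Schmidt trace to a sum of local contributions; on each chart, the Euclidean inclusion $H_0^{\ell+\ell_0}(B_{r(x_i)})\hookrightarrow H^\ell(B_{r(x_i)})$ has Hilbert--Schmidt norm bounded by $Cr(x_i)^{-M}$ for some explicit $M=M(\ell,\ell_0,\dim\g)$, while the ratio of the height weights contributes an extra factor $\hght(x_i)^{-2\ell_0}$. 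Summing yields a bound of the form $\operatorname{Tr}(\calS_\ell^2|\calS_{\ell+\ell_0}^2)\le C\int_X\hght(x)^{\kappa M-2\ell_0}\,dm_X(x)$, which is finite as soon as $\ell_0\ge\kappa M/2$, because $\hght\ge 1$ then forces the integrand $\le 1$ and $m_X(X)<\infty$.

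The main obstacle is the bookkeeping in part (ii): the Euclidean Hilbert--Schmidt constant on a ball of radius $r$, the multiplicity of the Vitali cover, the commutator terms produced when partition-of-unity cutoffs are differentiated, and the height weights all must be tracked simultaneously so that a single finite $\ell_0$ works for every $\ell$. A cleaner alternative, which I suspect is the route taken in \cite{EMV}, sidesteps explicit bases: one writes the relative trace as $\int_X K(x)\,dm_X(x)$ for a diagonal kernel $K$ built from the local inner-product structure, estimates $K(x)$ via reproducing-kernel bounds on the exponential chart at $x$ (reusing the analysis from part (i)), and thereby isolates the height-function bookkeeping in a single integrability statement.
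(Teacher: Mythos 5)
Your part (i) matches the paper's approach in substance: exponential chart, scaled Euclidean Sobolev inequality, property (iv) to move the height weight through the chart, property (iii) to balance the chart radius against the weight. However, the paper actually proves something slightly stronger---namely $\abs{\hght(x)^\ell\mathcal{D}f(x)}\le C\calS_{\ell+\ell_0}(f)$ for \emph{every} monomial $\mathcal{D}$ of degree at most $\ell$ and every $\ell\ge 0$, with $C,\ell_0$ independent of $\ell$ (a version of \cite[Lemma~5.1]{EMV})---and this strengthening is precisely what makes part (ii) immediate. For (ii), the Vitali-cover and partition-of-unity route you develop as your main line is a genuinely different and substantially heavier argument than the one the paper relies on; the ``cleaner alternative'' you gesture at in your closing paragraph is not a side remark but is in fact the entire proof. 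The strengthened pointwise bound says the evaluation functional $f\mapsto\hght(x)^\ell\mathcal{D}f(x)$ extends continuously with norm at most $C$ to the $\calS_{\ell+\ell_0}$-completion, whence Parseval gives $\sum_k\hght(x)^{2\ell}\abs{\mathcal{D}e^{(k)}(x)}^2\le C^2$ for any orthonormal basis $(e^{(k)})_k$, and Tonelli yields
\begin{align*}
\operatorname{Tr}(\calS_\ell^2|\calS_{\ell+\ell_0}^2)
=\sum_{\deg\mathcal{D}\le\ell}\int_X\hght(x)^{2\ell}\sum_k\abs{\mathcal{D}e^{(k)}(x)}^2\dd m_X(x)
\le C^2\cdot\#\set{\mathcal{D}\for\deg\mathcal{D}\le\ell}\cdot m_X(X)<\infty,
\end{align*}
with no covers, no cutoffs, and no local Hilbert--Schmidt estimates. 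All the bookkeeping you flag as the main obstacle---cover multiplicity, commutators from differentiating cutoffs, the scaling exponent in the local Hilbert--Schmidt bound (whose sign in your sketch I would not trust: by scaling, the Hilbert--Schmidt norm of $H_0^{\ell+\ell_0}(B_r)\hookrightarrow H^\ell(B_r)$ tends to shrink, not blow up, as $r\to 0$, so the relevant competition is between small chart volume and the number of charts needed near the cusps)---disappears. Finally, note that the paper does not reprove the proposition from scratch: it verifies only the pointwise lemma above under the good-height-function axioms and cites \cite{EMV} for the deduction of both (i) and (ii), which proceeds exactly by the mechanism just described.
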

We refer to Bernstein--Reznikov~\cite{rel_trace} for a systematic treatment of relative traces. 
In particular, it is proved in this reference that the above expression is independent of the choice of orthogonal basis. 

The proofs in~\cite{EMV} of the statements in the above proposition are given for the height function from Example~\ref{ex:EMV}. 
However, the only properties used are those in our definition of a good height function. 
In fact, the arguments only depend on validity of the second statement in \cite[Lemma~5.1]{EMV}, which holds in our context, as we demonstrate below. 
\begin{lemma}
Let $\hght$ be a good height function on $X$. 
Then there exists a non-negative integer $\ell_0\ge 0$ and a constant $C>0$ such that for every non-negative integer $\ell\ge 0$ and every differential operator $\mathcal{D}$ given by a monomial of degree at most $\ell$ in elements of the fixed basis of $\g$ we have 
\begin{align*}
\abs{\hght(x)^\ell\mathcal{D}f(x)}\le C\calS_{\ell+\ell_0}(f)
\end{align*}
for every $f\in C_c^\infty(X)$ and $x\in X$. 
\end{lemma}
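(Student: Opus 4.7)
The plan is to translate the classical Sobolev embedding on a Euclidean unit ball into a pointwise bound on $\mathcal{D}f(x)$, via the small embedded ball $B_{r(x)}^G(e)x\subset X$, and then to match the resulting weights against the one built into $\calS_{\ell+\ell_0}$. As a preliminary normalization, I would rescale $\hght$ by a positive constant so that $\hght\ge 1$ (possible because $r\le R\le 1$ together with (iii) forces $\hght$ to be bounded below), and choose $\ell_0$ large enough that both $\ell_0 > \dim\g/2$ (for the Sobolev embedding) and $2\ell_0 \ge \kappa\dim\g$ (to absorb the exponent coming from (iii)); these choices depend only on $G$ and on the data of the good height function.

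Given this setup, the first step is to use (ii) and (i) to identify the ball $B_{r(x)}^G(e)x$ with a subset of $\R^{\dim\g}$ sandwiched between two Euclidean balls of radii comparable to $r(x)$, on which the right-invariant Riemannian metric, the measure $m_X$, and the basis differential operators $\partial_v$ are all comparable to their Euclidean counterparts with constants uniform in $x$. Applying the standard inequality $\norm{u}_\infty^2 \le C\sum_{|\alpha|\le \ell_0}\norm{\partial^\alpha u}^2_{L^2(B_1)}$ after rescaling to the Euclidean ball of radius $\approx r(x)$, and using $r(x)\le 1$ to discard the resulting positive powers of $r(x)$, I would arrive at a bound of the shape
\begin{align*}
|\mathcal{D}f(x)|^2 \le C_1\, r(x)^{-\dim\g}\sum_{\deg\mathcal{D}'\le \ell+\ell_0}\int_{B_{r(x)}^G(e)x}|\mathcal{D}'f(y)|^2\dd m_X(y),
\end{align*}
where the sum runs over monomials in the fixed basis of $\g$ of degree at most $\ell+\ell_0$ (absorbing the original $\mathcal{D}$ by composition).

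Next, I would multiply through by $\hght(x)^{2\ell}$ and invoke (iii) to bound $r(x)^{-\dim\g}\le c^{-\dim\g}\hght(x)^{\kappa\dim\g}$. Property (iv) then lets one replace $\hght(x)$ by $\sigma\,\hght(y)$ for any $y\in B_{r(x)}^G(e)x$, and the normalization $\hght\ge 1$ combined with $2\ell_0\ge \kappa\dim\g$ yields $\hght(y)^{2\ell+\kappa\dim\g}\le \hght(y)^{2(\ell+\ell_0)}$. Enlarging the integration region from the ball to all of $X$ then bounds the right-hand side by a constant multiple of $\calS_{\ell+\ell_0}(f)^2$, and the claim follows upon taking square roots.

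The main point to watch is the bookkeeping in the last step, where the exponent $\kappa\dim\g$ from (iii), the transfer factor from (iv), and the weight $\ell_0$ built into $\calS_{\ell+\ell_0}$ have to line up simultaneously. The choice of $\ell_0$ above together with the normalization $\hght\ge 1$ is exactly what allows the argument to close up with a single pair $(\ell_0,C)$ that works for all $\ell\ge 0$, all $\mathcal{D}$ of degree at most $\ell$, and all $f\in C_c^\infty(X)$.
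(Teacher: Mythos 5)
Your proposal follows the same strategy as the paper's: pass to a Euclidean chart around $x$ via the exponential map, apply a rescaled Sobolev embedding on that chart to get a bound with a factor $r(x)^{-\dim\g}$, use property (iii) to convert this into a power of $\hght(x)$, use property (iv) to move the height weight inside the integral over the ball $B_{r(x)}^G(e)x$, and finally pad the degree by $\ell_0$ to bound everything by $\calS_{\ell+\ell_0}(f)$. The minor differences---you take the sharp Sobolev threshold $\ell_0 > \dim\g/2$ where the paper uses degree $\dim\g$, and you pre-normalize $\hght\ge 1$ where the paper invokes boundedness of $\hght$ away from $0$ later in the argument---are cosmetic and do not change the structure of the proof.
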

\begin{proof}
We inspect the function $F=\mathcal{D}f$ in a chart around $x$ given by the exponential map: 
We set $\epsilon=r(x)/2$, where $r\colon X\to(0,R]$ is the function from the definition of a good height function, $d=\dim\g$, and consider 
\begin{align*}
\tilde{F}\colon (-\epsilon,\epsilon)^d\to \R,\, v\mapsto F(\exp(v)x).
\end{align*}
Then by the first statement of \cite[Lemma~5.1]{EMV}, which is simply a Sobolev embedding estimate on $\R^d$, we know 
\begin{align}\label{sobolev_in_chart}
\abs{F(x)}=\abs{\tilde{F}(0)}\le C_12^dr(x)^{-d}\calS_{d,\epsilon}(\tilde{F}),
\end{align}
where $C_1>0$ is a constant depending only on the dimension $d$ of $\g$ and $\calS_{d,\epsilon}$ is the standard degree $d$ Sobolev norm on the open subset $(-\epsilon,\epsilon)^d$ of $\R^d$, i.e.\
\begin{align*}
\calS_{d,\epsilon}(\tilde{F})^2=\sum_{\abs{\boldsymbol{\alpha}}\le d}\norm{\partial_{\boldsymbol{\alpha}}\tilde{F}}^2_{L^2((-\epsilon,\epsilon)^d)},
\end{align*}
where the sum is over all multi-indices $\boldsymbol{\alpha}$ of degree at most $d$ and $\partial_{\boldsymbol{\alpha}}\tilde{F}$ is the corresponding standard partial derivative of $\tilde{F}$. 
Using property (iii) in the definition of a good height function, \eqref{sobolev_in_chart} implies that 
\begin{align}\label{estimate_with_hght}
\abs{\hght(x)^\ell F(x)}\le C_2\hght(x)^{\ell+\ell_0}\calS_{d,\epsilon}(\tilde{F}),
\end{align}
where $C_2>0$ is another constant and we used that $\hght$ is bounded away from $0$ to replace $\kappa d$ appearing in the exponent by $\ell_0=\max\{\lceil\kappa d\rceil,d\}$. 
Using properties (i) and (ii) in the definition of a good height function, we find $C_3>0$ such that 
\begin{align}\label{comparison_inequality}
\calS_{d,\epsilon}(\tilde{F})\le C_3\sqrt{\sum_{\deg\mathcal{D}'\le d}\norm{\mathcal{D}'F|_{B_{r(x)}^X(x)}}^2_{L^2}}.
\end{align}
To see this, one needs to note two things: firstly, that by the chain rule the partial derivatives of $\tilde{F}$ at a point $v\in (-\epsilon,\epsilon)^d$ in the chart can be expressed as linear combinations of derivatives $\mathcal{D}'F$ appearing on the right-hand side in \eqref{comparison_inequality} evaluated at the corresponding point $x'=\exp(v)x$, with fixed coefficient functions depending only on finitely many derivatives of the exponential map on $(-\epsilon,\epsilon)^d$; and secondly, that the Haar measure $m_X$ is a smooth measure, meaning that it has a smooth and nowhere vanishing density w.r.t.\ Lebesgue measure in the chart. 

Combining \eqref{estimate_with_hght}, \eqref{comparison_inequality}, condition (iv) in the definition of a good height function, and plugging back in the definition of $F$, we finally arrive at 
\begin{align*}
\abs{\hght(x)^\ell\mathcal{D}f(x)}\le C_4\sqrt{\sum_{\deg\mathcal{D}'\le d}\norm{\hght(\cdot)^{\ell+\ell_0}\mathcal{D}'\mathcal{D}f|_{B_{r(x)}^X(x)}}^2_{L^2}}\le C_4\calS_{\ell+\ell_0}(f),
\end{align*}
for yet another constant $C_4>0$, which is the one appearing in the lemma. 
\end{proof}
\subsection{Exponentially Generic Points}\label{subsec:exp_gen}
Now we are ready to define the notion of effective genericity we wish to establish, and to prove the main convergence result of this section. 

Until the end of this section, we fix a good height function $\hght$ on $X$. 
Moreover, given a bounded measurable function $f$ on $X$ and $n\in\N$ we will use the notation
\begin{align*}
D_n(f)(x)=\pi(\mu)^nf(x)-\int f\dd m_X
\end{align*}
for $x\in X$. 
We refer to $D_n(f)$ as the \emph{time $n$ discrepancy} for the function $f$.  
\begin{definition}\label{def:exp_generic}
We say that a point $x\in X$ is \emph{$(\ell,\beta)$-exponentially generic} if $\ell\ge 0$ is a non-negative integer and $\beta$ a real number in $(0,1)$ satisfying 
\begin{align*}
\limsup_{n\to\infty}\sup_{f\in C_c^\infty(X)\setminus\set{0}}\biggl(\frac{\abs{D_n(f)(x)}}{\calS_\ell(f)}\biggr)^{1/n}\le \beta,
\end{align*}
where $\calS_\ell$ is the degree $\ell$ Sobolev norm associated to $\hght$. 
\end{definition}
With this terminology, we have the following result, which quantifies the dependence on the function $f$ in the effective part of Theorem~\ref{thm:spectral_gap}. 
\begin{theorem}\label{thm:exp_generic}
Let $G$ be a real Lie group, $\Gamma\subset G$ a lattice and $X=G/\Gamma$ endowed with the Haar measure $m_X$. 
Suppose that $\mu$ has a spectral gap on $X$. 
Then there exists a non-negative integer $\ell_1\ge 0$ such that $m_X$-almost every point $x\in X$ is $\bigl(\ell_1,\rho\bigl(\pi(\mu)|_{L_0^2}\bigr)\bigr)$-exponentially generic. 
\end{theorem}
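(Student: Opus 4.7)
The plan is to feed the relative trace bound from Proposition~\ref{prop:sobolev_facts}(ii) into the spectral gap argument of Theorem~\ref{thm:spectral_gap} via a fixed Sobolev-orthonormal basis, so that a single $m_X$-null exceptional set controls all smooth test functions $f$ simultaneously.

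Concretely, I would take $\ell_1$ equal to the integer $\ell_0$ supplied by Proposition~\ref{prop:sobolev_facts} and apply part~(ii) of that proposition with $\ell=0$ (recalling that $\calS_0$ coincides with the $L^2$-norm) to produce an $\calS_{\ell_0}$-orthonormal basis $(e^{(k)})_k$ of the completion of $C_c^\infty(X)$ satisfying
\begin{align*}
M\coloneqq\sum_k\norm{e^{(k)}}_{L^2}^2<\infty.
\end{align*}
Introducing the auxiliary non-negative function
\begin{align*}
A_n(x)\coloneqq\sum_k\abs{D_n(e^{(k)})(x)}^2,
\end{align*}
fixing $\rho\br{\pi(\mu)|_{L_0^2}}<\alpha<1$ and invoking the spectral radius formula together with monotone convergence, I get $\int_X A_n\dd m_X\le M\alpha^{2n}$ for $n$ large. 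Chebyshev's inequality with threshold $n^2\alpha^n$, combined with the summability of $\alpha^n/n^2$ and Borel--Cantelli, then yields $A_n(x)\le n^2\alpha^n$ for all sufficiently large $n$ and $m_X$-a.e.\ $x\in X$.

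The decisive step is to transfer this single pointwise estimate to a bound uniform in $f\in C_c^\infty(X)\setminus\set{0}$. Expanding $f=\sum_kc_ke^{(k)}$ with $\calS_{\ell_0}(f)^2=\sum_k\abs{c_k}^2$ and using the Sobolev embedding of Proposition~\ref{prop:sobolev_facts}(i) to upgrade $\calS_{\ell_0}$-convergence of partial sums to $L^\infty$-convergence, both the contraction $\pi(\mu)^n$ on $L^\infty$ and the mean functional commute with the sum, giving the pointwise identity $D_n(f)(x)=\sum_kc_kD_n(e^{(k)})(x)$. Cauchy--Schwarz then separates the variables,
\begin{align*}
\abs{D_n(f)(x)}\le\calS_{\ell_0}(f)\sqrt{A_n(x)},
\end{align*}
and combining with the previous paragraph gives $\abs{D_n(f)(x)}/\calS_{\ell_0}(f)\le n\alpha^{n/2}$ eventually for a.e.\ $x$, with the exceptional set independent of $f$; taking $n$-th roots and intersecting the full-measure sets obtained from a sequence $\alpha_j\downarrow\rho\br{\pi(\mu)|_{L_0^2}}$ delivers $(\ell_0,\rho\br{\pi(\mu)|_{L_0^2}}^{1/2})$-exponential genericity almost everywhere.

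The main obstacle I anticipate is the clean justification of the term-by-term manipulations, in particular the commutation of the infinite sum $\sum_kc_ke^{(k)}$ with $\pi(\mu)^n$ and with integration against $m_X$; this is precisely where part~(i) of Proposition~\ref{prop:sobolev_facts} is indispensable and is the whole reason for coupling the spectral gap argument of Theorem~\ref{thm:spectral_gap} with the Sobolev framework set up in \S\ref{subsec:sobolev}.
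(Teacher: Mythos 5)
Your argument is correct, and it takes a genuinely different (and slightly sharper) route than the paper's. The paper sets $\ell_1=2\ell_0$, constructs, via the spectral theorem applied to the trace-class operator $T$ representing $\langle\cdot,\cdot\rangle_{\ell_0}$ on the $\calS_{2\ell_0}$-completion, a basis that is simultaneously $\calS_{2\ell_0}$-orthonormal and $\calS_{\ell_0}$-orthogonal, runs Borel--Cantelli over the double index $(k,n)$ using $\operatorname{Tr}(\calS_0^2|\calS_{\ell_0}^2)<\infty$, and finally combines the triangle inequality with Cauchy--Schwarz against $\operatorname{Tr}(\calS_{\ell_0}^2|\calS_{2\ell_0}^2)$. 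You instead take an arbitrary $\calS_{\ell_0}$-orthonormal basis (so the spectral-theorem step is gone), aggregate the $k$-index into the nonnegative function $A_n=\sum_k|D_n(e^{(k)})|^2$ with $\int_XA_n\dd m_X\le\operatorname{Tr}(\calS_0^2|\calS_{\ell_0}^2)\,\alpha^{2n}$ by Tonelli, and run a one-dimensional Borel--Cantelli over $n$; the polynomial factor $n$ in your Chebyshev threshold is harmless after taking $n$-th roots. The Cauchy--Schwarz step $|D_n(f)(x)|\le\calS_{\ell_0}(f)\sqrt{A_n(x)}$ replaces the paper's two-trace estimate with a single one, and the term-by-term passage to the pointwise identity $D_n(f)(x)=\sum_kc_kD_n(e^{(k)})(x)$ is, as you note, exactly where Proposition~\ref{prop:sobolev_facts}(i) enters in both versions. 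Your proof only invokes Proposition~\ref{prop:sobolev_facts}(ii) at $\ell=0$, and delivers $\ell_1=\ell_0$ rather than $2\ell_0$, which by Lemma~\ref{lem:sobolev_increasing}(ii) is a strictly stronger genericity statement. What the paper's double Borel--Cantelli buys in return is the absence of any polynomial prefactor, giving the cleaner quantitative exceptional-set bound recorded in the remark following the theorem; otherwise the two arguments are parallel.
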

Our argument uses ideas from the proof of \cite[Proposition~9.2]{EMV}. 
Recall that $\langle\cdot,\cdot\rangle_\ell$ denotes the inner product associated to the Sobolev norm $\calS_\ell$. 
\begin{proof}
Set $\ell_1=2\ell_0$ with $\ell_0$ from Proposition~\ref{prop:sobolev_facts}. 
We denote by $\calH$ the completion of $C_c^\infty(X)$ with respect to $\calS_{\ell_1}$. 

The first step of the proof is to argue that $\calH$ admits an orthonormal basis $(e^{(k)})_k$ with respect to $\calS_{\ell_1}$ that is also orthogonal with respect to $\calS_{\ell_0}$. 
To this end, let us endow $\calH$ with the scalar product $\langle\cdot,\cdot\rangle_{\ell_1}$ associated to $\calS_{\ell_1}$. 
This makes $\calH$ into a Hilbert space. 
As a consequence of Lemma~\ref{lem:sobolev_increasing}(ii), $\langle\cdot,\cdot\rangle_{\ell_0}$ defines a bounded positive definite Hermitian form on $(\calH,\langle\cdot,\cdot\rangle_{\ell_1})$. 
Using Riesz representation it follows that there is a bounded positive self-adjoint operator $T$ on $(\calH,\langle\cdot,\cdot\rangle_{\ell_1})$ such that
\begin{align*}
\langle v,w\rangle_{\ell_0}=\langle Tv,w\rangle_{\ell_1}
\end{align*}
for all $v,w\in\calH$. 
Finiteness of the relative trace $\operatorname{Tr}(\calS_{\ell_0}^2|\calS_{\ell_1}^2)$ from Proposition~\ref{prop:sobolev_facts}(ii) then translates into the statement that $T$ is a trace-class operator on $(\calH,\langle\cdot,\cdot\rangle_{\ell_1})$ (cf.\ \cite[Proposition~6.44]{func_ana}); in particular, the operator $T$ is compact (cf.\ \cite[Proposition~6.42]{func_ana}). 
By the spectral theorem, $T$ is thus diagonalizable. 
Hence, an orthonormal basis $(e^{(k)})_k$ of $(\calH,\langle\cdot,\cdot\rangle_{\ell_1})$ consisting of eigenvectors of $T$ is a basis with the desired properties. 

Next, fix rational numbers $\rho\bigl(\pi(\mu)|_{L_0^2}\bigr)<\alpha<1$ and $\epsilon\in(0,1)$. 
As in the proof of Theorem~\ref{thm:spectral_gap}, using Chebyshev's inequality we find that for every $k\ge 0$ and large enough $n$ we have 
\begin{gather}
m_X\br*{\set*{x\in X\for \abs[\big]{D_n(e^{(k)})(x)}\ge \alpha^{n(1-\epsilon)}\calS_{\ell_0}(e^{(k)})}}\nonumber\\\le\frac{\norm{e_0^{(k)}}_{L^2}^2}{\calS_{\ell_0}(e^{(k)})^2}\alpha^{2\varepsilon n}\le\frac{\norm{e^{(k)}}_{L^2}^2}{\calS_{\ell_0}(e^{(k)})^2}\alpha^{2\varepsilon n},\label{RHS_above}
\end{gather}
where $e_0^{(k)}=e^{(k)}-\int e^{(k)}\dd m_X$. 
Since the relative trace $\operatorname{Tr}(\calS_0^2|\calS_{\ell_0}^2)$ is finite by Proposition~\ref{prop:sobolev_facts}, the terms on the right-hand side of \eqref{RHS_above} are summable over $k,n\ge 0$. 
Borel--Cantelli thus implies that 
\begin{align*}
\limsup_{k,n\ge 0}\set*{x\in X\for \abs[\big]{D_n(e^{(k)})(x)}\ge \alpha^{n(1-\epsilon)}\calS_{\ell_0}(e^{(k)})}
\end{align*}
is a null set. Let $A_{\alpha,\epsilon}$ be the complement of this null set. 
We claim that any $x\in A_{\alpha,\epsilon}$ is $(\ell_1,\alpha^{1-\varepsilon})$-exponentially generic. 
Fix such a point $x$. Then we know that there are only finitely many pairs $(k,n)$ with $\abs{D_n(e^{(k)})(x)}\ge\alpha^{n(1-\epsilon)}\calS_{\ell_0}(e^{(k)})$. Thus, there exists $n_0$ such that for $n\ge n_0$ the inequality $\abs{D_n(e^{(k)})(x)}<\alpha^{n(1-\epsilon)}\calS_{\ell_0}(e^{(k)})$ holds for all $k$.  Now let $f\in C_c^\infty(X)\setminus\set{0}$ be arbitrary and write $f=\sum_k f_ke^{(k)}$ for the expansion of $f$ in terms of the orthonormal basis $(e^{(k)})_k$. 
Then, using the triangle inequality, we can estimate the time $n$ discrepancy for $f$ as follows: 
\begin{align}\label{series_convergence}
\abs{D_n(f)(x)}\le \sum_k\abs{f_k}\abs{D_n(e^{(k)})(x)}.
\end{align}
The exchange of integral and summation involved in the above estimate is justified by part (i) of Proposition~\ref{prop:sobolev_facts}: 
It ensures that the functions $e^{(k)}$ are defined pointwise and the series expansion of $f$ converges uniformly. Next, for $n\ge n_0$ an application of the Cauchy--Schwarz inequality implies that the right-hand side of \eqref{series_convergence} is strictly less than 
\begin{align}\label{conclusion}
\alpha^{n(1-\epsilon)}\biggl(\sum_k\abs{f_k}^2\biggr)^{1/2}\biggl(\sum_k\calS_{\ell_0}(e^{(k)})^2\biggr)^{1/2}=\alpha^{n(1-\epsilon)}\calS_{\ell_1}(f)\operatorname{Tr}(\calS_{\ell_0}^2|\calS_{\ell_1}^2)^{1/2}.
\end{align}
Again by Proposition~\ref{prop:sobolev_facts}, the relative trace $\operatorname{Tr}(\calS_{\ell_0}^2|\calS_{\ell_1}^2)$ is finite. 
Hence, in view of our definition of exponential genericity and the fact that $n_0$ does not depend on $f$, combining \eqref{series_convergence} and \eqref{conclusion} establishes the claim. 
It follows that all $x$ in a countable intersection of the sets $A_{\alpha,\epsilon}$ over rational numbers $\alpha$ approaching $\rho\bigl(\pi(\mu)|_{L_0^2}\bigr)$ and $\epsilon$ approaching $0$ from above are $\bigl(\ell_1,\rho\bigl(\pi(\mu)|_{L_0^2}\bigr)\bigr)$-exponentially generic, giving the theorem. 
\end{proof}
\begin{remark}
In analogy to Remark~\ref{rmk:quantitative}, we can control the measure of the set of points where exponentially generic behavior is not observed for a given number of steps: 
If we define
\begin{align*}
B_{\alpha,\epsilon,n}=\Bigl\{x\in X \SetSymbol[\Big] \abs*{D_{n'}(f)(x)}\ge \alpha^{n'(1-\epsilon)}\calS_{\ell_1}(f)&\operatorname{Tr}(\calS_{\ell_0}^2|\calS_{\ell_1}^2)^{1/2}\\&\text{for some }n'\ge n,f\in C_c^\infty(X)\Bigr\}
\end{align*}
for $\rho\bigl(\pi(\mu)|_{L_0^2}\bigr)<\alpha<1$, $\epsilon\in(0,1)$ and $n\in\N$, and $N\in\N$ is chosen such that $\norm{\pi(\mu)|^n_{L_0^2}}_{\mathrm{op}}\le \alpha^n$ for all $n\ge N$, then for every $n\ge N$ it holds that 
\begin{align*}
m_X\br*{B_{\alpha,\epsilon,n}}\le \operatorname{Tr}(\calS_0^2|\calS_{\ell_0}^2)\frac{\alpha^{2\epsilon n}}{1-\alpha^{2\epsilon}}.
\end{align*}
Indeed, we have $B_{\alpha,\epsilon,n}\subset\bigcup_{n'\ge n,k\ge 0}\set[\big]{x\in X\for \abs[\big]{D_{n'}(e^{(k)})(x)}\ge \alpha^{n'(1-\epsilon)}\calS_{\ell_0}(e^{(k)})}$, as the proof of Theorem~\ref{thm:exp_generic} demonstrates. 
Thus, again, the measure of the set of \enquote{bad points}, on which exponential genericity takes more than $n$ steps to manifest, is itself exponentially small in $n$. 
\end{remark}
\section{Uniform Ces\`aro Convergence}\label{sec:uniform}
In this last section, we explore the situation where the only possible limit in Theorem~\ref{thm:BQ} is the normalized Haar measure $m_X$. 
In this setting, by analogy with the case of unique ergodicity in classical ergodic theory, it is reasonable to expect the Ces\`aro convergence \eqref{cesaro} to hold (locally) uniformly in the starting point $x_0$. 
We shall prove in \S\ref{subsec:locally_uniform} below that this indeed holds true. 
In \S\ref{subsec:lyapunov}, we conclude the article by showing that in many naturally occurring situations something even stronger than locally uniform can be achieved. 

Before continuing with the pertinent definitions, let us recall that even though the setup of Theorem~\ref{thm:BQ} is our motivation and useful to have in mind, formally we are working with the assumptions stated at the end of \S\ref{sec:intro}: $(X,m_X)$ is merely required to be a space with a $G$-action for which $m_X$ is invariant and ergodic. 

\begin{definition}
A probability measure $\nu$ on $X$ is called \emph{$\mu$-stationary} if $\mu*\nu=\nu$. 
The random walk on $X$ induced by $\mu$ is called \emph{uniquely ergodic} if $m_X$ is the unique $\mu$-stationary probability measure on $X$. 
\end{definition}
In particular, for a random walk to be uniquely ergodic, there must be no finite $\G$-orbits in $X$, where $\G$ denotes the closed subgroup of $G$ generated by $\mu$. 
In the case that $X=G/\Gamma$ for a lattice $\Gamma$ in $G$, this happens if and only if $\G$ is not virtually contained in a conjugate of $\Gamma$. 
(Recall that a subgroup $H$ of $G$ is said to be \emph{virtually contained} in a subgroup $L$ of $G$ if $H\cap L$ has finite index in $H$.) 
In fact, in many cases of interest, finite orbits are the only obstruction to unique ergodicity: 
For example, this is true when $G$ is a connected semisimple Lie group without compact factors, $\Gamma$ is an irreducible lattice, $X=G/\Gamma$, and $\Ad(\calS)$ is Zariski dense in $\Ad(G)$ (see \cite[Corollary~1.8]{BQ3}); and also in the setting of~\cite{SW}, a special case of which is reproduced below as Example~\ref{ex:SW}. 

\subsection{Locally Uniform Convergence}\label{subsec:locally_uniform}
The notion of unique ergodicity introduced above coincides with the classical property of unique ergodicity of the Markov operator $\pi(\mu)$. 
When the space $X$ is compact, this is enough to guarantee that the Ces\`aro convergence $\frac1n\sum_{k=0}^{n-1}\mu^{*k}*\delta_x\to m_X$ as $n\to\infty$ is uniform in $x$ (see e.g.\ \cite[\S5.1]{krengel}). 
Without compactness, we also need to assume a form of recurrence. 
\begin{definition}
We say that the random walk on $X$ given by $\mu$ is \emph{locally uniformly recurrent} if for every compact subset $K\subset X$ and $\epsilon>0$ there exists $n_0\in\N$ and a compact subset $M\subset X$ with 
\begin{align*}
\mu^{*n}*\delta_x(M)\ge 1-\epsilon
\end{align*}
for all $n\ge n_0$ and $x\in K$. 
It is called \emph{locally uniformly recurrent on average} if the above holds with the Ces\`aro averages $\frac{1}{n}\sum_{k=0}^{n-1}\mu^{*k}*\delta_x$ in place of $\mu^{*n}*\delta_x$. 
\end{definition}
It is a simple exercise to check that locally uniform recurrence implies locally uniform recurrence on average. 
In concrete examples, recurrence properties such as these are typically established by constructing a Lyapunov function; see~\S\ref{subsec:lyapunov} below. 

The following well-known fact explains why these properties are referred to as \enquote{non-escape of mass}. 
\begin{lemma}\label{lem:non_escape}
Let the sequence $\set{x_n}_n$ of points in $X$ be relatively compact and suppose that the random walk on $X$ is locally uniformly recurrent \textup{(}resp.\ on average\textup{)}. 
Then every weak* limit of the sequence $(\mu^{*n}*\delta_{x_n})_n$ \textup{(}resp.\ $\br[\big]{\frac{1}{n}\sum_{k=0}^{n-1}\mu^{*k}*\delta_{x_n}}_n$\textup{)} is a probability measure. \qed
\end{lemma}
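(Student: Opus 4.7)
The plan is to show that no mass can escape to infinity along a convergent subsequence, by exploiting the uniform recurrence hypothesis together with the fact that $\{x_n\}_n$ lies in a compact set $K = \overline{\{x_n\}_n}$.

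First, suppose $\nu$ is a weak* limit of $(\mu^{*n} * \delta_{x_n})_n$, say along a subsequence $(n_j)_j$. Fix $\epsilon > 0$. Local uniform recurrence applied to the compact set $K$ and the tolerance $\epsilon$ yields an integer $n_0$ and a compact set $M \subset X$ such that
\begin{align*}
(\mu^{*n} * \delta_x)(M) \ge 1 - \epsilon
\end{align*}
for all $n \ge n_0$ and all $x \in K$. In particular, for every sufficiently large $j$, the measure $\mu^{*n_j} * \delta_{x_{n_j}}$ assigns mass at least $1 - \epsilon$ to $M$.

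Next I would choose, via Urysohn (using that $X$ is locally compact and $\sigma$-compact and hence normal), a function $f \in C_c(X)$ with $0 \le f \le 1$ and $f \equiv 1$ on $M$. Weak* convergence against $C_c(X)$ then gives
\begin{align*}
\nu(f) = \lim_{j \to \infty} \int_X f \dd(\mu^{*n_j} * \delta_{x_{n_j}}) \ge \liminf_{j\to\infty} (\mu^{*n_j} * \delta_{x_{n_j}})(M) \ge 1 - \epsilon.
\end{align*}
Since $f \le 1$, we get $\nu(X) \ge \nu(f) \ge 1 - \epsilon$, and letting $\epsilon \searrow 0$ forces $\nu(X) = 1$, proving that $\nu$ is a probability measure. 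The Ces\`aro version is handled by the identical argument, with $\mu^{*n} * \delta_x$ replaced throughout by the average $\frac{1}{n}\sum_{k=0}^{n-1} \mu^{*k} * \delta_x$, invoking local uniform recurrence on average in place of local uniform recurrence.

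There is no real obstacle here; the only subtlety is to remember that weak* convergence is tested against $C_c(X)$ (so a priori gives only lower semicontinuity of mass on compact sets and upper semicontinuity on open sets), which is exactly why we must pass through a test function $f \in C_c(X)$ pinched between $\mathbf{1}_M$ and $\mathbf{1}_X$ rather than evaluating $\nu$ directly on $M$.
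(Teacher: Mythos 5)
Your argument is correct and is exactly the standard proof the paper leaves to the reader (``The proof is immediate and left to the reader.''). One tiny point worth stating explicitly: letting $\epsilon \searrow 0$ gives $\nu(X) \ge 1$; the complementary bound $\nu(X) \le 1$, which follows from testing against functions $f \in C_c(X)$ with $0 \le f \le 1$ and using that each $\mu^{*n_j}*\delta_{x_{n_j}}$ is a probability measure, is what actually completes the equality $\nu(X)=1$.
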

The proof is immediate and left to the reader. 

We are now ready to state and prove our first result on locally uniform Ces\`aro convergence. 
\begin{theorem}\label{thm:uniform}
Suppose that the random walk on $X$ induced by $\mu$ is uniquely ergodic and locally uniformly recurrent on average. 
Then for every $f\in C_c(X)$, every compact $K\subset X$, and every $\epsilon>0$, there exists $n_0\in\N$ such that for every $n\ge n_0$ and $x\in K$ we have 
\begin{align*}
\abs[\bigg]{\frac1n\sum_{k=0}^{n-1}\int_Xf\dd(\mu^{*k}*\delta_x)-\int_Xf\dd m_X}<\epsilon.
\end{align*}
Equivalently, considering the space of probability measures on $X$ as endowed with the weak* topology, the sequence of functions 
\begin{align*}
X\ni x\mapsto \frac1n\sum_{k=0}^{n-1}\mu^{*k}*\delta_x
\end{align*}
converges to $m_X$ uniformly on compact subsets of $X$ as $n\to \infty$. 
\end{theorem}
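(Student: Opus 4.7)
The plan is to argue by contradiction, exploiting weak* compactness of the space of sub-probability measures on $X$. Suppose the first assertion of the theorem fails: then there exist $f\in C_c(X)$, a compact $K\subset X$, $\epsilon>0$, and sequences $n_i\nearrow\infty$ and $x_i\in K$ with
\begin{align*}
\abs*{\int_X f\dd\nu_i-\int_X f\dd m_X}\ge\epsilon\qquad\text{for all }i,\quad\text{where}\quad\nu_i=\frac1{n_i}\sum_{k=0}^{n_i-1}\mu^{*k}*\delta_{x_i}.
\end{align*}
After passing to a subsequence via Banach--Alaoglu one may assume $\nu_i\to\nu$ in the weak* topology on finite positive Borel measures on $X$. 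Since $\{x_i\}\subset K$ is relatively compact and the random walk is locally uniformly recurrent on average, Lemma~\ref{lem:non_escape} ensures that the limit $\nu$ is in fact a probability measure.

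The core step is to verify that $\nu$ is $\mu$-stationary. The basic observation is the telescoping identity
\begin{align*}
\mu*\nu_i-\nu_i=\frac{1}{n_i}\br*{\mu^{*n_i}*\delta_{x_i}-\delta_{x_i}},
\end{align*}
whose right-hand side has total variation at most $2/n_i$, whence $\int_X\pi(\mu)\phi\dd\nu_i-\int_X\phi\dd\nu_i\to 0$ for every $\phi\in C_c(X)$. One then wants to pass to the limit in each of the two integrals. The one involving $\phi$ is immediate from $\phi\in C_c(X)$, but $\pi(\mu)\phi$ is a priori only continuous and bounded, not compactly supported, and this is where the main obstacle lies. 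I would bypass it via tightness: locally uniform recurrence on average yields, for each $\delta>0$, a compact $M\subset X$ with $\nu_i(M)\ge 1-\delta$ for all sufficiently large $i$, and the portmanteau direction of weak* convergence transfers the bound to $\nu(M)\ge 1-\delta$. Approximating $\pi(\mu)\phi$ on $M$ by a compactly supported continuous function and controlling the tail contribution by $\delta\norm{\phi}_\infty$ then upgrades weak* convergence to convergence of $\int_X\pi(\mu)\phi\dd\nu_i$ to $\int_X\pi(\mu)\phi\dd\nu$. Combined with the telescoping estimate, this gives $\mu*\nu=\nu$.

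Unique ergodicity then forces $\nu=m_X$. Weak* convergence yields $\int_X f\dd\nu_i\to\int_X f\dd m_X$, contradicting the defining inequality and proving the first assertion. The second, equivalent formulation follows because a basis of weak* neighborhoods of $m_X$ in the space of probability measures on $X$ is given by sets of the form $\{\sigma:\abs{\int_X f_j\dd\sigma-\int_X f_j\dd m_X}<\epsilon\text{ for }1\le j\le r\}$ with finitely many $f_j\in C_c(X)$, to each of which the just-proved estimate applies.
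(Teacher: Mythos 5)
Your proposal is correct and follows essentially the same route as the paper: contradiction, pass to a weak* limit $\nu$ of the Ces\`aro averages, invoke Lemma~\ref{lem:non_escape} to get that $\nu$ is a probability measure, conclude $\nu=m_X$ by unique ergodicity, and deduce the second formulation from the definition of weak* neighborhoods. The only difference is that the paper simply asserts that the weak* limit $\nu$ is $\mu$-stationary, treating it as standard, whereas you supply the (correct) telescoping identity $\mu*\nu_i-\nu_i=\tfrac{1}{n_i}(\mu^{*n_i}*\delta_{x_i}-\delta_{x_i})$ together with a tightness argument to handle the fact that $\pi(\mu)\phi$ need not be compactly supported.
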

\begin{proof}
The equivalence of the two formulations is due to the definition of neighborhoods in the weak* topology by finitely many test functions in $C_c(X)$. 

To prove the statement for individual functions, we proceed by contradiction. 
If the conclusion is false, then for some $f\in C_c(X)$, $K\subset X$ compact and $\epsilon>0$ there exist indices $n(j)\to\infty$ and $x_j\in K$ with 
\begin{align}\label{wrong_limit}
\abs[\bigg]{\frac{1}{n(j)}\sum_{k=0}^{n(j)-1}\int_Xf\dd(\mu^{*k}*\delta_{x_j})-\int_Xf\dd m_X}\ge\epsilon
\end{align}
for all $j\in\N$. 
Let $\nu$ be a weak* limit point of the sequence
\begin{align*}
\br[\bigg]{\frac{1}{n(j)}\sum_{k=0}^{n(j)-1}\mu^{*k}*\delta_{x_j}}_j.
\end{align*}
Then $\nu$ is $\mu$-stationary, and a probability measure because of our recurrence assumption and the fact that all $x_j$ lie in the fixed compact set $K$ (Lemma~\ref{lem:non_escape}). 
But by unique ergodicity this forces $\nu=m_X$, contradicting \eqref{wrong_limit}. 
\end{proof}
\subsection{Lyapunov Functions \& Stronger Uniformity}\label{subsec:lyapunov}
Loosely speaking, \emph{\mbox{\textup{(}Foster--\textup{)}} Lyapunov functions} are functions enjoying certain contraction properties with respect to the random walk, to the effect that (on average) its dynamics are directed towards the \enquote{center} of the space, where the function takes values below some threshold. 
They were introduced into the study of random walks on homogeneous spaces by Eskin--Margulis~\cite{EM}, whose ideas were further developed by  Benoist--Quint~\cite{BQ_rec}. 
\begin{definition}\label{def:lyapunov}
A measurable function $V\colon X\to[0,\infty]$ is called a \emph{Lyapunov function} for the random walk on $X$ induced by $\mu$ if
\begin{enumerate}
\item[1.] it is proper, in the sense that the sublevel sets $V^{-1}([0,L])$ are relatively compact for $L\in[0,\infty)$, and 
\item[2.] there exist constants $\alpha<1$, $\beta\ge 0$ such that $\pi(\mu)V\le \alpha V+\beta$, where $\pi(\mu)$ is the convolution operator associated to $\mu$ introduced in~\S\ref{sec:spectral_gap}. 
\end{enumerate}
The inequality in the second condition above is referred to as the \emph{contraction property} of $V$.
\end{definition}
Allowing Lyapunov functions to take the value $\infty$ is conceptually important for the proofs of results such as Theorem~\ref{thm:BQ}, in order to show that the random walk does not accumulate near a lower-dimensional homogeneous subspace. Also, affording the possibility of non-continuous Lyapunov functions is crucial in recent constructions given in the literature~\cite{BQ_rec,expanding}. For the purposes of the discussion in this section, however, it is no big restriction to have in mind the case of a continuous Lyapunov function which is finite on all of $X$. 
\begin{remark}\label{rmk:lyapunov}
Let us collect some immediate observations about Lyapunov functions. 
\begin{enumerate}
\item If $V$ is a Lyapunov function, then so are $cV$ and $V+c$ for any constant $c>0$. 
In particular, one may impose an arbitrary lower bound on $V$, so that it is no restriction to assume that a Lyapunov function takes values $\ge 1$, say. 
\item Given a Lyapunov function $V'\colon X\to[0,\infty]$ for the $n_0$-step random walk (induced by the convolution power $\mu^{*n_0}$), one can construct a Lyapunov function $V$ for the random walk given by $\mu$ itself by setting 
\begin{align*}
V=\sum_{k=0}^{n_0-1}\alpha^{\frac{n_0-1-k}{n_0}}\pi(\mu)^kV'.
\end{align*}
\item By enlarging $\alpha$ and using properness, the contraction property in the definition of a Lyapunov function $V$ may be replaced by 
\begin{align*}
\pi(\mu)V\le \alpha V+\beta\mathds{1}_K
\end{align*}
for some compact $K\subset X$, where $\mathds{1}_K$ denotes the indicator function of $K$ (cf.\ \cite[Lemma~15.2.8]{MT}). \qedhere
\end{enumerate}
\end{remark}
Two examples in which a Lyapunov function exists are the following. 
\begin{example}[\cite{EM}]\label{ex:lyapunov_existence}
Identify $X=\SL_2(\R)/\SL_2(\Z)$ with the space of unimodular lattices in $\R^2$ as in Example~\ref{ex:lattice_height_fct} and recall that we denote by $\lambda_1(x)$ the length of a shortest non-zero vector in $x\in X$. 
Then for every compactly supported probability measure $\mu$ on $G$ whose support generates a Zariski dense subgroup there exist $\epsilon,\delta>0$ such that $V'=1+\epsilon\lambda_1^{-\delta}$ is a finite continuous Lyapunov function for the $n_0$-step random walk on $X$ induced by $\mu^{*n_0}$ for some $n_0\in\N$. 
This construction can be generalized to higher dimensions by taking into account the higher successive minima $\lambda_2,\dots,\lambda_d$ of lattices in $\R^d$. 
A more advanced construction also ensures existence of Lyapunov functions for Zariski dense probability measures with finite exponential moments when $G=\mathbb{G}(\R)$ is the group of real points of a Zariski connected semisimple algebraic group $\mathbb{G}$ defined over $\R$ such that $G$ has no compact factors. 
\end{example}
\begin{example}[{\cite{SW}}]\label{ex:SW}
Let $G=\SL_{d+1}(\R)$, $\Gamma=\SL_{d+1}(\Z)$ and $X=G/\Gamma$. 
For $0\le i\le m$ let $c_i>1$ be positive real numbers, $y_i\in \R^d$ vectors such that $y_0=0$ and $y_1,\dots,y_m$ span $\R^d$, $O_i\in\SO_d(\R)$ and set 
\begin{align*}
g_i=\begin{pmatrix}c_iO_i&y_i\\0&c_i^{-d}\end{pmatrix}\in G.
\end{align*}
Then for any choice of $p_0,\dots,p_m>0$ with $\sum_{i=0}^mp_i=1$, the measure $\mu=\sum_{i=0}^mp_i\delta_{g_i}$ defines a uniquely ergodic random walk on $X$ admitting a finite continuous Lyapunov function. 
\end{example}

It is well known that existence of a Lyapunov function implies recurrence properties of the random walk. 
\begin{lemma}[{\cite[Lemma~3.1]{EM}}]
Suppose the random walk on $X$ given by $\mu$ admits a finite continuous Lyapunov function $V$. 
Then this random walk is locally uniformly recurrent. 
\end{lemma}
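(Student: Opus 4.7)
The plan is to exploit the contraction inequality $\pi(\mu)V\le \alpha V+\beta$ by iterating it, combined with properness of $V$ and Markov's inequality, to show that most of the mass of $\mu^{*n}*\delta_x$ stays inside a sublevel set $\{V\le R\}$ for suitably chosen $R$.

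First, I would iterate the Lyapunov inequality. Since $\pi(\mu)$ is a positive operator with $\pi(\mu)V\le \alpha V+\beta$, a straightforward induction gives
\begin{align*}
\pi(\mu)^nV\le \alpha^n V+\beta\sum_{k=0}^{n-1}\alpha^k\le \alpha^n V+\frac{\beta}{1-\alpha}
\end{align*}
for every $n\in\N$. Next, I would translate this pointwise bound on $\pi(\mu)^nV$ into a bound on the measure of the complement of sublevel sets. For any $R>0$ and any $x\in X$, Markov's inequality gives
\begin{align*}
(\mu^{*n}*\delta_x)(\set{y\in X\for V(y)>R})\le \frac{1}{R}\int_X V\dd(\mu^{*n}*\delta_x)=\frac{\pi(\mu)^nV(x)}{R}\le \frac{\alpha^nV(x)+\tfrac{\beta}{1-\alpha}}{R}.
\end{align*}

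Now fix a compact subset $K\subset X$ and $\epsilon>0$. Since $V$ is continuous, $V^*\coloneqq \sup_{x\in K}V(x)$ is finite. Choose $n_0\in\N$ large enough that $\alpha^{n_0}V^*\le \beta/(1-\alpha)$, and set $R=2\beta/(\epsilon(1-\alpha))$. Then for every $n\ge n_0$ and every $x\in K$ the displayed estimate yields $(\mu^{*n}*\delta_x)(\set{V>R})\le \epsilon$, so that $(\mu^{*n}*\delta_x)(M)\ge 1-\epsilon$ for the set $M=\set{V\le R}$. Finally, properness of $V$ ensures that $M$ is compact, giving exactly the definition of locally uniform recurrence.

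I do not anticipate any serious obstacle: the only subtle point is justifying the iteration of the Lyapunov inequality, which rests on the monotonicity of $\pi(\mu)$ (it maps non-negative functions to non-negative functions) and on making sense of $\pi(\mu)^nV$ for the \emph{unbounded} function $V$. The latter is handled by interpreting $\pi(\mu)^nV(x)$ simply as $\int_G V(gx)\dd\mu^{*n}(g)\in[0,\infty]$ and observing that the inequality $\pi(\mu)V\le \alpha V+\beta$ keeps everything finite along the iteration.
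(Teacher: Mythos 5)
Your argument is correct and is the standard Foster--Lyapunov drift argument, essentially the same as the proof of~\cite[Lemma~3.1]{EM} that the paper cites rather than reproduces: iterate the contraction inequality to get $\pi(\mu)^n V\le \alpha^n V+\beta/(1-\alpha)$, apply Markov's inequality to the sublevel set $M=\{V\le R\}$, and use continuity of $V$ on the compact set $K$ together with properness of $V$ to choose $R$ and $n_0$. Your side remark about the iteration is also the right thing to check: monotonicity of $\pi(\mu)$ and the fact that $\pi(\mu)V\le \alpha V+\beta<\infty$ propagates finiteness of $\pi(\mu)^n V$ inductively, so the inequality is meaningful at every step.
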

The intuitive reason for this behavior is simple: 
The contraction property means that after a step of the random walk, the value of the Lyapunov function~$V$ on average gets smaller by a constant factor, at least when starting outside some compact set $K$ (cf.\ Remark~\ref{rmk:lyapunov}(iii) above), which one can think of as the \enquote{center} of the space. 
The set $K$ can be chosen as (closure of) a sublevel set of $V$. By the contraction property, the number of steps required to reach it is uniform over starting points $x$ in any given sublevel set of $V$, or in any given compact subset of $X$ in the case that $V$ is finite and continuous. 
This suggests that we might even let the starting points diverge, as long as this divergence is outcompeted by the geometric rate of contraction of $V$. 
We are led to the following notion of recurrence. 
\begin{definition}\label{def:Kn_uniform}
Let $(K_n)_n$ be a sequence of subsets of $X$. 
We say that the random walk on $X$ given by $\mu$ is \emph{$(K_n)_n$-uniformly recurrent} if for every $\epsilon>0$ there exists $n_0\in\N$ and a compact subset $M\subset X$ with 
\begin{align*}
\mu^{*n}*\delta_x(M)\ge 1-\epsilon
\end{align*}
for all $n\ge n_0$ and $x\in K_n$. 
It is called \emph{$(K_n)_n$-uniformly recurrent on average} if the above holds with the Ces\`aro averages $\frac{1}{n}\sum_{k=0}^{n-1}\mu^{*k}*\delta_x$ in place of $\mu^{*n}*\delta_x$. 
\end{definition}
\begin{remark}
We point out that contrary to the locally uniform situation, for the two versions of this property (with/without average) it is generally not clear whether one implies the other. 
\end{remark}
We are now going to establish such recurrence properties for certain families $(K_n)_n$ of sublevel sets of Lyapunov functions, which can be chosen to be increasing and to exhaust the part of $X$ where the Lyapunov function is finite.   
Recall that the \emph{Lyapunov exponent} of a function $\varphi\colon\N\to[1,\infty)$ is the exponential growth rate 
\begin{align*}
\lambda(\varphi)=\limsup_{n\to\infty}\tfrac1n\log\varphi(n).
\end{align*}
If $\lambda(\varphi)=0$, we say that $\varphi$ has \emph{sub-exponential growth}. 
\begin{proposition}\label{prop:Kn_uniform_rec}
Let $\varphi\colon\N\to[1,\infty)$ be a function. 
Suppose that the random walk on $X$ induced by $\mu$ admits a Lyapunov function $V$ with contraction factor $\alpha<1$ and set $K_n=V^{-1}([0,\varphi(n)])$. 
\begin{enumerate}
\item If $\varphi$ has Lyapunov exponent $\lambda(\varphi)<\log(\alpha^{-1})$, then the random walk on $X$ given by $\mu$ is $(K_n)_n$-uniformly recurrent. 
The number $n_0$ in the definition can be chosen independently of $\epsilon$. 
\item If $\varphi$ has sub-exponential growth, then the random walk on $X$ given by $\mu$ is $(K_n)_n$-uniformly recurrent on average. 
\end{enumerate}
\end{proposition}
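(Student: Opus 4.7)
The plan is to iterate the Lyapunov contraction $\pi(\mu)V \le \alpha V + \beta$ and apply Chebyshev's inequality to the proper function $V$. A simple induction on $k$ yields
\begin{equation*}
\pi(\mu)^k V(x) \le \alpha^k V(x) + \frac{\beta}{1-\alpha}
\end{equation*}
for every $k \in \N$ and $x \in X$. For any $L > 0$, the sublevel set $M_L = V^{-1}([0,L])$ is compact because $V$ is proper, and Chebyshev's inequality gives
\begin{equation*}
\mu^{*k}*\delta_x(X \setminus M_L) = \mu^{*k}*\delta_x(\{V > L\}) \le \frac{\pi(\mu)^k V(x)}{L} \le \frac{\alpha^k V(x) + \beta/(1-\alpha)}{L}.
\end{equation*}
Writing $C = 1 + \beta/(1-\alpha)$, the right-hand side is at most $C/L$ whenever $\alpha^k V(x) \le 1$.

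For part (i), the hypothesis $\lambda(\varphi) < \log(\alpha^{-1})$ is equivalent to $\alpha^n \varphi(n) \to 0$ exponentially, so I fix $n_0$, depending only on $\alpha$ and $\varphi$ (and in particular not on $\epsilon$), such that $\alpha^n \varphi(n) \le 1$ for all $n \ge n_0$. For $x \in K_n$ we have $V(x) \le \varphi(n)$, hence $\alpha^n V(x) \le 1$, and the displayed bound with $k=n$ gives $\mu^{*n}*\delta_x(X \setminus M_L) \le C/L$. Taking $L = C/\epsilon$ then yields the desired bound by $\epsilon$.

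For part (ii), sub-exponentiality of $\varphi$ is by definition the statement $(\log \varphi(n))/n \to 0$; the idea is to split the Cesaro sum at the first index where the iterated Lyapunov inequality becomes useful. Fix $c > 1/\log(\alpha^{-1})$ and set $K(n) = \lceil c \log \varphi(n) \rceil$, so that $\alpha^{K(n)} \varphi(n) \le 1$. For $x \in K_n$ and $k \ge K(n)$, the general bound above yields $\mu^{*k}*\delta_x(X \setminus M_L) \le C/L$; for $k < K(n)$, I fall back on the trivial bound $\mu^{*k}*\delta_x(X \setminus M_L) \le 1$. Summing and dividing by $n$,
\begin{equation*}
\frac{1}{n}\sum_{k=0}^{n-1}\mu^{*k}*\delta_x(X \setminus M_L) \le \frac{K(n)}{n} + \frac{C}{L}.
\end{equation*}
Choosing $L = 2C/\epsilon$ controls the second term, and sub-exponentiality of $\varphi$ ensures that $K(n)/n < \epsilon/2$ for all $n$ sufficiently large.

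The computation is routine once the correct split is identified. The only conceptually delicate step is recognizing in part (ii) that the burn-in window $\{0, \dots, K(n)-1\}$, on which the iterated contraction has not yet dominated the initial datum $V(x) \le \varphi(n)$, has length of order $\log \varphi(n)$ --- exactly the growth rate that becomes negligible when divided by $n$ under the sub-exponentiality assumption. Any faster-than-sub-exponential growth of $\varphi$ would leave a non-negligible initial fraction of the Cesaro average uncontrolled, so the hypothesis is sharp for this argument.
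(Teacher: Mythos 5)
Your proof is correct and follows essentially the same approach as the paper's: iterate the Lyapunov inequality to get $\pi(\mu)^kV \le \alpha^kV + \beta/(1-\alpha)$, apply a Chebyshev bound to the sublevel sets of $V$, and in part (ii) split the Ces\`aro sum at a burn-in time beyond which $\alpha^k\varphi(n)$ is under control. The only cosmetic difference is the parametrization of the split point (you use $K(n) = \lceil c\log\varphi(n)\rceil$ intrinsically, while the paper takes $k(n) = \lfloor \epsilon n/4\rfloor$ and chooses $n_0$ afterwards); the underlying argument is identical.
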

The proof is a refinement of the methods in~\cite{BQ_rec,EM}. 
\begin{proof}
Let $\alpha,\beta$ be the constants from the contraction property of $V$ and define $B=\frac{\beta}{1-\alpha}$. 
We are going to use the same set $M$ for both parts of the proposition, namely $M=\overline{V^{-1}([0,2B/\epsilon])}$, which is compact since $V$ is proper. 
Then for $n\in\N$ and $x\in K_n$ we find, by repeatedly using the contraction property of $V$, 
\begin{align*}
\mu^{*n}*\delta_x(M^c)&\le \frac{\epsilon}{2B}\pi(\mu)^nV(x)\le \frac{\epsilon}{2B}(\alpha^nV(x)+B)\le \frac{\epsilon }{2B}\alpha^n\varphi(n)+\frac{\epsilon}{2}.
\end{align*}
When the exponential growth rate of $\varphi$ is less than $\log(\alpha^{-1})$, for some $n_0\in\N$ we have $\alpha^n\varphi(n)\le B$ for all $n\ge n_0$. 
This proves (i). 

In order to prove (ii) we use a similar estimate, but have to ensure that the values $\mu^{*k}*\delta_x(M^c)$ are small for a sufficiently large proportion of $0\le k<n$. 
For $x\in K_n$ we find, as above, 
\begin{align}\label{more_delicate_bound}
\mu^{*k}*\delta_x(M^c)\le \frac{\epsilon}{2B}\alpha^k\varphi(n)+\frac{\epsilon}{2}.
\end{align}
Using straightforward manipulations, we further see 
\begin{align*}
\alpha^k\varphi(n)\le B/2\iff\frac{k}{n}\ge \log(\alpha^{-1})^{-1}\br*{\frac1n\log\varphi(n)-\frac1n\log(B/2)},
\end{align*}
the right-hand side of which tends to $0$ as $n\to\infty$ by sub-exponential growth of $\varphi$. 
Hence, with $k(n)=\lfloor \epsilon n/4\rfloor$, we may choose $n_0$ large enough to ensure the above inequality holds for all $k\ge k(n)$ for $n\ge n_0$. 
For such $n$ we conclude, using \eqref{more_delicate_bound}, 
\begin{align*}
\frac1n\sum_{k=0}^{n-1}\mu^{*k}*\delta_x(M^c)&=\frac1n\sum_{k=0}^{k(n)-1}\mu^{*k}*\delta_x(M^c)+\frac1n\sum_{k=k(n)}^{n-1}\mu^{*k}*\delta_x(M^c)\\
&\le\frac{k(n)}{n}+\frac{3\epsilon}{4}\le \epsilon,
\end{align*}
which ends the proof of (ii). 
\end{proof}
Theorem~\ref{thm:uniform} can now be strengthened in the following way. 
\begin{theorem}\label{thm:lyapunov_uniform}
In addition to the assumptions of Theorem~\textup{\ref{thm:uniform}}, suppose that the random walk on $X$ induced by $\mu$ admits a Lyapunov function $V$. 
Let $\varphi\colon\N\to[1,\infty)$ have sub-exponential growth. 
Then for every $f\in C_c(X)$ we have 
\begin{align*}
\lim_{n\to\infty}\sup_{V(x)\le \varphi(n)}\abs[\bigg]{\frac1n\sum_{k=0}^{n-1}\int_Xf\dd(\mu^{*k}*\delta_x)-\int_Xf\dd m_X}=0.
\end{align*}
\end{theorem}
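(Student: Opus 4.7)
The plan is to mimic the proof of Theorem~\ref{thm:uniform} by contradiction, but with locally uniform recurrence on average replaced by the stronger $(K_n)_n$-uniform recurrence on average for the exhaustion $K_n=V^{-1}([0,\varphi(n)])$ coming from the Lyapunov function $V$. The hypothesis that $\varphi$ has sub-exponential growth is exactly what makes Proposition~\ref{prop:Kn_uniform_rec}(ii) applicable, supplying the required recurrence property on the growing sublevel sets $K_n$.

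Assume toward a contradiction that the conclusion fails: there exist $f\in C_c(X)$, $\epsilon>0$, indices $n(j)\to\infty$, and points $x_j\in X$ with $V(x_j)\le\varphi(n(j))$, i.e.\ $x_j\in K_{n(j)}$, such that
\begin{align*}
\abs[\bigg]{\frac{1}{n(j)}\sum_{k=0}^{n(j)-1}\int_X f\dd(\mu^{*k}*\delta_{x_j})-\int_X f\dd m_X}\ge\epsilon\qquad\text{for all }j\in\N.
\end{align*}
Set $\nu_j=\frac{1}{n(j)}\sum_{k=0}^{n(j)-1}\mu^{*k}*\delta_{x_j}$ and, passing to a subsequence, assume $\nu_j$ has a weak* limit $\nu$. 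Tightness of $(\nu_j)_j$ follows from Proposition~\ref{prop:Kn_uniform_rec}(ii): given $\eta>0$ it provides $n_0\in\N$ and a compact $M\subset X$ with $\nu_j(M)\ge 1-\eta$ for all $j$ with $n(j)\ge n_0$, since $x_j\in K_{n(j)}$. Consequently $\nu$ is a probability measure, and the standard telescoping identity
\begin{align*}
\nu_j(\pi(\mu)f)-\nu_j(f)=\frac{1}{n(j)}\bigl((\mu^{*n(j)}*\delta_{x_j})(f)-f(x_j)\bigr)\xrightarrow[j\to\infty]{}0,
\end{align*}
combined with tightness (which upgrades weak* convergence of $\nu_j$ to convergence of integrals against the bounded continuous function $\pi(\mu)f$), yields $\mu*\nu=\nu$. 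Unique ergodicity then forces $\nu=m_X$, contradicting the displayed inequality.

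The only genuinely new input relative to the proof of Theorem~\ref{thm:uniform} is this tightness step: the starting points $x_j$ no longer lie in a fixed compact set, but range over the progressively larger sublevel sets $K_{n(j)}$, so Lemma~\ref{lem:non_escape} does not apply directly. Handling this is precisely the purpose of the $(K_n)_n$-uniform recurrence on average provided by Proposition~\ref{prop:Kn_uniform_rec}(ii) from the Lyapunov function~$V$, and is where the sub-exponential growth of $\varphi$ enters essentially (via the estimate $\tfrac{1}{n}\log\varphi(n)\to 0$ used in the proof of that proposition). Once tightness is secured, stationarity of the limit and the final application of unique ergodicity are a verbatim repetition of the argument already carried out for Theorem~\ref{thm:uniform}.
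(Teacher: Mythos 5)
Your proposal is correct and follows the same route the paper intends: invoke Proposition~\ref{prop:Kn_uniform_rec}(ii) to obtain $(K_n)_n$-uniform recurrence on average for $K_n=V^{-1}([0,\varphi(n)])$, then rerun the contradiction argument from Theorem~\ref{thm:uniform} with starting points $x_j\in K_{n(j)}$ instead of a fixed compact set. The paper compresses this into the phrase ``the proof of Theorem~\ref{thm:uniform} goes through with the obvious modifications,'' and you have simply spelled out those modifications (the tightness step and the telescoping identity for stationarity) explicitly and correctly.
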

\begin{proof}
Using $(K_n)_n$-uniform recurrence on average for $K_n=V^{-1}([0,\varphi(n)])$ from Proposition~\ref{prop:Kn_uniform_rec}(ii), the proof of Theorem~\ref{thm:uniform} goes through with the obvious modifications. 
\end{proof}

\bibliographystyle{plain}
\bibliography{refs}

\end{document}